\newtheorem*{conjectureNC}{Conjecture}
\newtheorem{theoremA}{Theorem}
\newtheorem{heuristic}{Heuristic}
\author[Oussama Hamza]{Oussama Hamza}
\address{Institute for Advanced Studies in Mathematics, Harbin Institute of Technology, Harbin, China 150001}
\email{ohamza3@uwo.ca}
\author{Donghyeok Lim}
\address{Department of Mathematics Education, Korea National University of Education, Cheongju 28173, South Korea}
\email{donghyeokklim@gmail.com}
\author{Christian Maire}
 \address{Université Marie et Louis Pasteur,  CNRS, Institut FEMTO-ST, F-25000 Besançon, France}
\email{christian.maire@univ-fcomte.fr}
\title{Massey products and unipotent extensions with restricted ramification}
\date{\today}
\dedicatory{Dedicated to J{\'a}n Min{\'a}{\v c} on his 71st birthday}
\begin{abstract}
We fix a prime~$p$ and construct new cases of pro-$p$ extensions of number fields with restricted ramification and splitting, whose Galois groups decompose as coproducts of pro-$p$ absolute Galois groups of local fields. As a consequence, these pro-$p$ extensions satisfy the strong Massey vanishing property and thus admit large unipotent quotients.
\end{abstract}
\subjclass{55S30, 11R32, 12G05, 20E06, 20F05}
\keywords{Pro-$p$ extensions with restricted ramification and splitting, Mild presentations, Massey products, Unipotent representations}
\thanks{The first and third authors gratefully acknowledge the support of the Western Academy for Advanced Research
(WAFAR) in Western University, during the year 2022/23 and the Institute for Advanced Studies in Mathematics (IASM) in Harbin Institute of Technology, during the summer 2025. The third author is grateful to WAFAR and IASM for support during visits in summers 2023 and 2025. The third author was
partially supported by the EIPHI Graduate School (contract “ANR-17-EURE-0002") and by the Bourgogne Franche-Comté Region. The second author was supported by the National Research Foundation of Korea(NRF) grants No. RS-2024-00462910. The authors thank Elyes Boughattas for his comments and interest in this paper.}
\newcommand{\Q}{\mathbb{Q}}
\newcommand{\F}{\mathbb{F}}
\newcommand{\Z}{\mathbb{Z}}
\newcommand{\NN}{\mathbb{N}}
\def\U{\mathbb{U}}
\def\P{\mathbb{P}}
\def\G{{\mathcal G}}
\def\p{{\mathfrak p}}
\def\q{{\mathfrak q}}
\def\Frat{{\rm Frat}}
\def\Cl{{\rm Cl}}
\def\O{{\mathcal O}}
\def\PP{{\mathcal P}}
\def\Uu{{\mathcal U}}
\def\rk{{\rm rank}}
\def\I{{\mathcal I}}
\def\Ff{{\mathcal F}}
\def\T{{\mathcal T}}
\def\D{{\mathcal D}}
\def\bU{{\mathbb U}}
\def\p{{\mathfrak p}}
\def\l{{\mathfrak l}}
\def\rk{{\rm rk}}
\begin{document}

\maketitle

\section*{Introduction}

\subsection*{Context} The absolute Galois group of a field is a fundamental object in algebra and arithmetic. A natural and longstanding question is:
\smallskip

\begin{center}
\textit{Which profinite groups can be realized as absolute Galois groups of fields?}
\end{center}

\smallskip
In general, fully understanding the structure of absolute Galois groups is highly mysterious and notoriously difficult. A common approach is therefore to fix a prime $p$ and study the maximal pro-$p$ quotient, known as the pro-$p$ absolute Galois group.
For number fields, it is common to consider further quotients of the pro-$p$ absolute Galois group by restricting the ramification.
In this context, presentations of pro-$p$ groups in terms of generators and relations play a central role. A pioneering result was given by Golod and Shafarevich in the 1960s~\cite{Golod}, who used the idea of presentation to construct the first examples of infinite~$p$-class field towers. Their method was  systematically extended to the setting of pro-$p$ absolute Galois groups unramified outside finite sets~$S$ of primes, that we denote by~$G_S$~(a detailed introduction to these Galois groups can be found in \cite{Koch}).  When~$S$ contains all~$p$-adic places (i.e.\ in the wildly ramified case), the study of~$G_S$ motivated the development of the Poitou–Tate duality theory. Further study of these wildly ramified pro-$p$ extensions, often guided by analogies with the theory of Riemann surfaces, produced explicit presentations of~$G_S$ in various cases.

In the early 2000s, Labute~\cite{Labute} introduced the notion of \textit{mild} pro-$p$ groups (see  \S \ref{Def Mild} for definition) for odd primes~$p$ to provide examples of tamely ramified~$G_S$ of cohomological dimension~$2$: these pro-$p$ groups are infinite and  FAB (i.e.\ every open subgroup has finite abelianization),  a property that follows from class field theory. This result was later extended to the case~$p=2$ by Labute and Min{\'a}{\v c} in~\cite{Labute-Minac}. Labute's idea has since inspired a range of further developments. Notably, Schmidt~\cite{schmidt2008uber} proved a general result showing that, by suitably enlarging the set~$S$ to allow additional tame ramification, the Galois groups~$G_{S}$ become mild.

\smallskip

Massey products, originally introduced in a geometrical context (see ~\cite{massey1998higher}) as finer group invariants than cohomology algebras, were independantly used by Morishita~\cite{morishita2004milnor} and Vogel~\cite{vogel2004massey} to extract information about the presentations of~$G_S$. Gärtner~\cite{gartner2015higher} further connected these results with mild presentations.
Building on Dwyer's works~\cite{dwyer1975homology}, which established a connection between Massey products and unipotent representations, Min{\'a}{\v c} and Tân conjectured the following two necessary conditions for pro-$p$ groups to be isomorphic to the pro-$p$ absolute Galois group~$G_K$ of a field~$K$.

\vskip 5pt

$(i)$ The Kernel Unipotent Conjecture~\cite[Conjecture~1.3]{minavc2015kernel} predicts that the Zassenhaus filtration of~$G_K$, when~$K$ contains a primitive~$p$-th root of the unity~$\zeta_p$, is given by the kernels of unipotent representations.

\vskip 5pt

$(ii)$ The Massey vanishing conjecture, which is the main focus of this paper, was first formulated in~\cite{minavc2016triple} under the assumption~$\zeta_p \in K^{\times}$, and later extended to a more general form in~\cite{minac2016triple}. The conjecture can be formally stated as follows:

\begin{conjectureNC}[Min{\'a}{\v c}-Tân]
Let~$G_K$ be a pro-$p$ absolute Galois group over a field~$K$.
Then~$G_K$ satisfies the Massey vanishing property.
\end{conjectureNC}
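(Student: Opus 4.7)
Since this is an open conjecture, what follows is not a complete strategy but rather the framework that has produced all known partial cases, together with the obstruction any proof must overcome. The plan is to convert Massey vanishing into an arithmetic embedding problem with unipotent kernel, and then to attack that problem either through the cohomological structure of absolute Galois groups or, for sufficiently arithmetic $K$, through a Hasse-type principle for suitable splitting varieties.

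First I would reformulate, following Dwyer, the vanishing of an $n$-fold Massey product $\langle \chi_1,\dots,\chi_n\rangle \subset H^2(G_K,\F_p)$ as the solvability of an embedding problem with kernel the center $Z$ of $\U_{n+1}(\F_p)$: the Massey product contains zero precisely when there exists a continuous $\rho \colon G_K \to \U_{n+1}(\F_p)$ whose outer-diagonal components realize the given $\chi_i \in H^1(G_K,\F_p)$. Standard reductions (pass to a pro-$p$ Sylow quotient of $G_K$, and via restriction-corestriction reduce to the case $\zeta_p \in K$) then put us in a setting where Kummer theory and the Rost-Voevodsky norm residue isomorphism make $H^1$ and $H^2$ explicit. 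For small $n$ (triple and quadruple products), I would use these to translate the obstruction into the splitting of a central simple algebra and invoke Merkurjev-Suslin and its higher analogues, which is the path followed in the existing unconditional proofs of $n=3$ and $n=4$ for arbitrary fields.

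For general $n$ and general $K$, I would mimic the Harpaz-Wittenberg strategy that settles the number-field case: construct a splitting variety $X/K$ whose $K$-rational points parametrise unipotent lifts of $(\chi_1,\dots,\chi_n)$, produce local solutions using the known structure of the local absolute Galois groups $G_{K_v}$ (free pro-$p$ in the tame case, Demu{\v s}kin in the wild case), and then deduce a global solution from a Hasse principle for $X$ established by fibration methods over $\mathbb{P}^1$. The main obstacle is that this last ingredient is not available for an arbitrary field: there is no analogue of the Harpaz-Wittenberg fibration method outside the arithmetic setting, and a general $G_K$ lacks the structural rigidity---bounded cohomological dimension, FAB open subgroups, explicit mild presentations---that drives the restricted-ramification analogues treated in this paper. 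Any complete proof will, I expect, either have to formulate a Hasse-type local-global principle valid far beyond number fields, or produce a genuinely new universal construction that glues local unipotent lifts into a global one; that gluing step is where the real difficulty lies.
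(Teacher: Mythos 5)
The statement you were asked about is the Min\'a\v{c}--T\^an conjecture, which the paper states only as background: it is an open problem, and the paper contains no proof of it (it merely cites the known partial results --- local fields by Min\'a\v{c}--T\^an, the case $n=4$, $p=2$ for number fields by Guillot--Min\'a\v{c}--Topaz--Wittenberg, and the full number-field case by Harpaz--Wittenberg --- before turning to the different, \emph{strong} Massey vanishing property for groups $G_S^T$ with restricted ramification). You correctly recognize that no proof exists, and your survey of the standard strategy (Dwyer's reformulation as an embedding problem with kernel the center of $\U_{n+1}$, splitting varieties and the Harpaz--Wittenberg fibration method in the arithmetic case, and the absence of any local-to-global gluing mechanism over a general field) is accurate and consistent with the paper's own framing. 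One small imprecision: the unconditional results for arbitrary fields via Merkurjev--Suslin-type arguments cover $n=3$; the $n=4$ case over arbitrary fields is not part of the classical picture you describe (the paper itself only cites $n=4$, $p=2$ over number fields). Since the paper proves nothing here, there is no argument of the paper to compare yours against; your text should be read as a research program, not a proof.
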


We refer the reader to  \S \ref{MVP} for the relevant definitions.
This conjecture has been the subject of active research over a wide range of base fields (see, for example~\cite{merkurjev2024lectures} for a detailed overview of known results). In the arithmetic context,
 Min{\'a}{\v c} and Tân~\cite[Theorem~$4.3$]{minavc2016triple} showed the conjecture for $G_K$, when $K$ is a local field. Subsequently, Guillot, Min{\'a}{\v c}, Topaz and Wittenberg~\cite{guillot2018four} verified the Massey vanishing property in the case~$n=4$ and~$p=2$, for~$G_K$ when~$K$ is a number field. Finally, Harpaz and Wittenberg~\cite{harpaz2023massey} completely resolved the conjecture for number fields.

\smallskip

Let~$G$ be a pro-$p$ group, and let~$n \geq 3$ be an integer. For an $n$-tuple~$(\chi_1,\dots, \chi_n)$ of homomorphisms from~$G$ to~$\F_p$, if the Massey product~$\langle \chi_1, \dots, \chi_n\rangle$ is defined  (see \S~\ref{MVP} for definition), then the cup products $\chi_j\cup \chi_{j+1}$ vanish for all~$1\leq j\leq n-1$. This motivates the following properties on $G$, known as the strong Massey vanishing property:

\smallskip
\begin{center}
\textit{If~$\chi_j\cup \chi_{j+1}=0$ for all~$1 \leq j \leq n-1$, then the Massey product~$\langle \chi_1,\dots \chi_n\rangle$ vanishes.}     
\end{center}

\smallskip
Min{\'a}{\v c} and Tân~\cite{minavc2017counting} studied this property for pro-$p$ absolute Galois groups. However, its validity turns out to be more delicate. Harpaz and Wittenberg showed that the strong Massey vanishing property does not hold for $G_K$, when $K$ is a number field containing an~$8$-th primitive root of the unity ~\cite[Appendix]{guillot2018four}. Merkurjev and Scavia~\cite{merkurjev2023degenerate} generalized the previous argument for several other fields.

By contrast, the second author, together with Min{\' a}{\v c}, Ramakrishna, and Tân~\cite{maire2024strong} verified the strong Massey vanishing property for $G_K$, when $K$ is a number field not containing the~$p$-th roots of unity, and its tame absolute quotient.
These contrasting results illustrate the increased complexity that the presence of the~$p$-th roots of unity in~$K$ may bring to the study of~$G_K$.

\subsection*{Results}

In this work, we study the strong Massey vanishing property in the context of restricted ramification, and we use it to construct unipotent extensions of number fields with small number of ramified primes. By a unipotent extension, we mean a Galois extension whose Galois group is isomorphic to~$\U_n \coloneq  \U_n(\F_p)$, the group of~$n \times n$ upper triangular unipotent matrices over~$\F_p$.

\smallskip

It has been observed that realizing a finite group as a Galois group comes at the cost of introducing significant ramification. If~$K$ does not contain~$\zeta_p$, the generalization of the Scholz-Reichardt method from~$\Q$ to~$K$ already requires a nontrivial amount of ramification. When~$\zeta_p \in K$, the situation becomes considerably more difficult. For example, for general fields~$K$, no general reasonable upper bound is known for the number of ramified primes needed to realize arbitrary~$2$-groups (see, for instance,~\cite{Schmid}). This again shows that the presence of~$\zeta_p$ in the base field~$K$ affects the complexity of studying~$G_K$.

\smallskip

On the other hand, wild ramification enables the construction of a variety of finite~$p$-groups~$G$ as Galois groups over number fields unramified outside~$p$. Specifically, for a prime~$p$ and a number field~$K$ with~$r_2$ complex places, the Galois group of the maximal pro-$p$ extension of~$K$ unramified outside~$p$ is often a free pro-$p$ group of rank~$r_{2}+1$ (see~$p$-rationality in \S 2.3). By the universal property of free pro-$p$ groups, any finite~$p$-group~$G$ with generator rank at most~$r_{2}+1$ can then be realized as a Galois group over~$K$, unramified outside~$p$. In particular, we can easily demonstrate that any finite~$p$-group~$G$ can be realized as a Galois group~$Gal(L/K')$ over some number field~$K'$, such that the extension~$L/K'$ is unramified outside~$p$.

\smallskip

However, using only wild ramification has limitations when the base field is fixed, as the generator rank of such Galois groups is bounded by~$r_{2}+1$. To go beyond this, it is necessary to allow tame ramification.
Inspired by  Wingberg~\cite{MR697311}, Movahhedi~\cite{MR1124802} and, Jaulent and Sauzet~\cite{JaulentSauzet}, we study new situations where pro-$p$ extensions with restricted ramification and splitting condition is a coproduct of free and Demushkin components. These groups check the strong Massey vanishing property, which allows us, in the best situation, to infer unipotent quotients with generator rank~$2r_{2}+2$, which is twice the maximal generator rank allowed in the purely wild case.

\

Let us introduce notations before stating our results. Let~$K$ be a number field of signature~$(r_1,r_2)$. Let~$S$ and~$T$ be finite sets of primes of~$K$ such that~$T$ is disjoint from $S$, and let~$S_p$ denote the set of primes of $K$ lying above $p$.
Define
$$\delta_S \coloneq \sum_{\p \in S \cap S_p}[K_\p:\Q_p], \quad
r^T_S \coloneq \delta_S-(r_1+r_2-1+|T|).$$
Let~$K^T_S$ be the maximal pro-$p$ extension of~$K$ unramified outside~$S$ and totally decomposed at~$T$, and set~$G^T_S \coloneq Gal(K^T_S/K)$ and~$G^{T,ab}_S \coloneq G^T_S/[G^T_S,G^T_S]$. 
For the definition of the strong Massey vanishing property, see Definition \ref{defi-MVP} in \S \ref{MVP}.

\begin{theoremA}\label{theoA}
Suppose that~$G^{T,ab}_S \simeq \Z_p^{r^T_S}$. Then there exist infinitely many sets~$N$ of tame primes of~$K$ with~$|N|=r^T_S$ such that~$G_{S\cup N}^T$ satisfies the strong Massey vanishing property. As a consequence, if~$r^T_S \geq 2$, then there exists a surjective homorphism $$\rho_S: G^T_{S\cup N} \twoheadrightarrow \U_{2r^T_S+1},$$ where~$\U_{2r^T_S+1}$ denotes  the group of upper-unitriangular matrices of size~$2r^T_S+1$ over~$\F_p$.
\end{theoremA}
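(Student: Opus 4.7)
The strategy is to select $N$ so that $G^T_{S\cup N}$ decomposes as a free pro-$p$ coproduct of pro-$p$ absolute Galois groups of local completions; strong Massey vanishing then propagates from each local factor, and the unipotent quotient is extracted via Dwyer's correspondence between vanishing Massey products and $\U_n$-representations.

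The hypothesis $G^{T,ab}_S \simeq \Z_p^{r^T_S}$ ensures that the abelianization is torsion-free of the expected Shafarevich rank, which removes the obvious cohomological obstructions to enlarging $S$ cleanly. Following the circle of ideas of Wingberg, Movahhedi, and Jaulent--Sauzet invoked in the introduction, I would select $r^T_S$ tame primes $\mathfrak{q}_1, \ldots, \mathfrak{q}_{r^T_S}$ with $N(\mathfrak{q}_i) \equiv 1 \pmod{p}$, whose Frobenius classes form a suitable complementary basis at a finite level of $G^T_S$; infinitely many such sets $N$ exist by Chebotarev density. A structural theorem (presumably established earlier in the paper) then identifies $G^T_{S\cup N}$ with a free pro-$p$ coproduct of the groups $G_{\mathfrak{p}}(p)$ for $\mathfrak{p}$ in a suitable subset of $S\cup N$, each factor being free pro-$p$ or Demushkin.

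Given this decomposition, each local factor satisfies strong Massey vanishing: free pro-$p$ groups trivially (since $\cd = 1$), and Demushkin groups by Min{\'a}{\v c}--T\^an. The property is preserved under free pro-$p$ coproducts because the cohomology ring of a coproduct splits as a direct sum of the factor contributions in each degree, killing mixed cup products and higher Massey operations between distinct factors. Hence $G^T_{S\cup N}$ satisfies the strong Massey vanishing property. For the second assertion, the generator rank of $G^T_{S\cup N}$ is $2r^T_S$ (the original $r^T_S$ generators together with one new inertia character per $\mathfrak{q}_i$, independent by construction). Pick $2r^T_S$ linearly independent characters $\chi_1, \ldots, \chi_{2r^T_S} \in H^1(G^T_{S\cup N}, \F_p)$, ordered so that consecutive $\chi_i, \chi_{i+1}$ come from distinct coproduct factors, which forces $\chi_i \cup \chi_{i+1} = 0$. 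Strong Massey vanishing then gives $\langle \chi_1, \ldots, \chi_{2r^T_S}\rangle = 0$, and Dwyer's theorem supplies a homomorphism $\rho_S: G^T_{S\cup N} \to \U_{2r^T_S+1}$, surjective because the $\chi_i$ are independent in the Frattini quotient.

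The main obstacle is the coproduct decomposition: once that is secured, the remaining steps are relatively formal. This is precisely where the hypothesis $G^{T,ab}_S \simeq \Z_p^{r^T_S}$ is essential, guaranteeing that the Koch--Shafarevich presentation of $G^T_S$ carries no hidden torsion preventing a clean free pro-$p$ splitting when augmented by the carefully chosen tame primes, and allowing the computation of generator and relator ranks to align so that the new local factors attach freely rather than introducing extra relations in~$H^2$.
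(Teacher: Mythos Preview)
Your proposal is correct and follows essentially the same route as the paper: the hypothesis $G^{T,ab}_S \simeq \Z_p^{r^T_S}$ yields injectivity of $\varphi^T_S$ and $\T^T_S=1$, so $G^T_S$ is free of rank $r^T_S$; Chebotarev produces maximal $(S,T)$-primitive sets $N$ of $r^T_S$ tame primes; the natural map $\coprod_{\q\in N}\G_\q \to G^T_{S\cup N}$ is then an isomorphism (surjectivity plus matching abelianizations plus $H^2(\,\cdot\,,\Q_p/\Z_p)=0$), and the coproduct inherits strong Massey vanishing from its Demushkin factors, after which the $\U_{2r^T_S+1}$-quotient is built exactly as you describe. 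One small imprecision: in this situation the coproduct factors are \emph{only} the $\G_\q$ for $\q\in N$ (each a rank-$2$ Demushkin group), not local groups at places of $S$; the $p$-adic contribution is already absorbed into the freeness of $G^T_S$.
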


As initiated in~\cite{maire2024strong} and inspired by~\cite{conti2025liftinggaloisrepresentationskummer}, we can lift the coefficients of the map~$G_{S\cup N}^T \twoheadrightarrow \U_{2r^T_S+1}$ to~$\Z/p^m \coloneq \Z/p^m\Z$, provided that the set~$N$ of primes and the integer~$m$ are suitably chosen. Furthermore, if the matrix is not required to be of maximal size $2r^T_S+1$, one can work over~$\Z/p^m$ for any~$m$.
For instance:

\begin{theoremA}\label{theoB}
Let~$K$ be a number field with~$r_2\geq 2$, and assume the Gras and Leopoldt Conjectures. Fix an integer~$m \geq 1$. Then, for all sufficiently large primes~$p$, there exist infinitely many sets~$N$ of~$r_2$ tame primes such that there exists a surjective homomorphism~$G_{S_p\cup N} \twoheadrightarrow \U_{2r_2+2}(\Z/p^m)$.
\end{theoremA}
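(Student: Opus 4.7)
The plan is to combine a $p$-rational setup of Theorem~\ref{theoA} with the coefficient-lifting strategy of~\cite{maire2024strong, conti2025liftinggaloisrepresentationskummer}. Take $T=\emptyset$ and $S=S_p$ in the framework of Theorem~\ref{theoA}; one computes
$$\delta_{S_p}=\sum_{\p\in S_p}[K_\p:\Q_p]=[K:\Q]=r_1+2r_2, \qquad r^{\emptyset}_{S_p}=r_2+1.$$
Gras' conjecture predicts that $K$ is $p$-rational for all sufficiently large primes~$p$, and under Leopoldt's conjecture at~$p$ this is equivalent to $G^{ab}_{S_p}\cong\Z_p^{r_2+1}$, i.e.\ precisely the hypothesis of Theorem~\ref{theoA}.

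Next, one selects $N=\{\p_1,\dots,\p_{r_2}\}$ by imposing two families of Chebotarev conditions on the $\p_i$: first, $N(\p_i)\equiv 1\pmod{p^m}$, so that the tame inertia at $\p_i$ admits $\Z/p^m$ as a quotient; second, Frobenius conditions arranged so that the consecutive cup products of the characters constructed below vanish in $H^2(G_{S_p\cup N},\Z/p^m)$, and so that the reductions modulo~$p$ of these characters remain $\F_p$-linearly independent. Both families are open conditions of positive Chebotarev density, hence are satisfied by infinitely many choices of $N$. The $p$-rationality of $K$ then supplies $r_2+1$ ``wild'' characters $\chi_1,\dots,\chi_{r_2+1}: G_{S_p\cup N}\to\Z/p^m$ by reducing a $\Z_p$-basis of $G^{ab}_{S_p}$ modulo~$p^m$, while the tame inertia at the $\p_i$ yields $r_2$ additional characters $\chi_{r_2+2},\dots,\chi_{2r_2+1}$ also valued in $\Z/p^m$. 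One thus obtains exactly the $2r_2+1$ characters needed to specify a homomorphism into $\U_{2r_2+2}(\Z/p^m)$.

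The final step is to upgrade the strong Massey vanishing provided by Theorem~\ref{theoA} from $\F_p$- to $\Z/p^m$-coefficients. The decisive point, as emphasized in the paragraph preceding the theorem, is that one is not operating at the maximal matrix size $2r^{\emptyset}_{S_p}+1 = 2r_2+3$ but only at $2r_2+2$; this single-row of slack provides the freedom for the integral lift, following~\cite{maire2024strong,conti2025liftinggaloisrepresentationskummer}. Combined with the Chebotarev-enforced vanishing of the consecutive cup products, this produces the desired homomorphism $\rho : G_{S_p\cup N}\to\U_{2r_2+2}(\Z/p^m)$, and surjectivity is forced by the $\F_p$-linear independence of the reductions of the $\chi_i$, built into the Frobenius conditions defining~$N$.

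\textbf{Main obstacle.} The delicate step is the last one: Theorem~\ref{theoA} yields $\F_p$-valued Massey data, whereas Theorem~B demands the analogous vanishing over $\Z/p^m$. I expect the bulk of the work to consist in carefully transporting the lifting machinery of~\cite{maire2024strong,conti2025liftinggaloisrepresentationskummer} to the restricted-ramification mild setting here: one must verify that the mild presentation of $G_{S_p\cup N}$ is preserved by the stronger congruence $N(\p_i)\equiv 1\pmod{p^m}$ and that the single dimension of slack in the matrix size really suffices to lift every obstruction class uniformly to~$\Z/p^m$, for~$m$ arbitrary.
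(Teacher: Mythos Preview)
Your setup is correct — the reduction to $p$-rationality via Gras/Leopoldt, the computation $r^{\emptyset}_{S_p}=r_2+1$, and the imposition of the congruence $N(\q)\equiv 1\pmod{p^m}$ on the primes of $N$ all match the paper. But from that point on you diverge from the paper's argument and, more importantly, you miss the single structural fact that makes everything work.

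The paper does \emph{not} arrange cup-product vanishing by ad hoc Chebotarev conditions, nor does it lift an $\F_p$-valued Massey solution to $\Z/p^m$ by any separate machinery. Instead, it shows that for a suitable set $N$ of $r_2$ primes which are $m$-tame and whose Frobenii are linearly independent in $(G_{S_p})^{p,el}$, the Galois group decomposes as a coproduct
\[
G_{S_p\cup N}\;\cong\;\Bigl(\coprod_{\q\in N}\G_\q\Bigr)\amalg\,\Z_p,
\]
with each $\G_\q$ a rank-$2$ Demushkin group in $\D_m$ (this is Theorem~\ref{theo_arithmetic_main} applied with $N\subsetneq M$, the extra $\Z_p$-factor being the Frobenius at a prime completing $N$ to a maximal $S_p$-primitive set). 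This coproduct lies in $\overline{\D_m}$, hence satisfies $(\PP_m)$ by Theorem~\ref{stab coprod}; in particular the $m$-strong Massey vanishing holds \emph{directly} over $\Z/p^m$, with no lifting step. Proposition~\ref{unipotent quotients} then immediately gives the surjection onto $\U_{2r_2+2}(\Z/p^m)$, since $h^1=2r_2+1$. The ``Chebotarev conditions for consecutive cup-product vanishing'' and the ``lifting of $\F_p$-Massey data to $\Z/p^m$'' that you flag as the main work simply do not appear: both are absorbed into the coproduct structure.

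You also have the causality backwards on the matrix size. The drop from $2r_2+3$ to $2r_2+2$ is not a deliberate sacrifice buying ``freedom for the integral lift.'' It is forced by arithmetic: since $K_1\subset K_{S_p}^{p,el}$, any $m$-tame prime (for $m>\nu$) splits in $K_1$, so its Frobenius lies in the codimension-one subspace $Gal(K_{S_p}^{p,el}/K_1)$. Hence at most $r_2$, not $r_2+1$, such primes can have independent Frobenii; the coproduct therefore has only $r_2$ Demushkin factors in $\D_m$ plus one $\Z_p$, giving $h^1=2r_2+1$ and hence matrix size $2r_2+2$. The ``slack'' is an output of this obstruction, not an input to a lifting argument.
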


Here~$\mathbb{U}_{n+1}(\Z/p^m)$ denotes the group of  upper-triangular unipotent~$(n + 1)\times (n + 1)$-matrices with entries in~$\Z/p^m$. A more general version that incorporates splitting conditions is given in Heuristic~\ref{heu-Wief} in \S \ref{subsection-so}.

\smallskip

Gras's Conjecture is central in this kind of study, and we refer the reader to  Conjecture~\ref{conjGras} in \S~\ref{section_gras} for a more precise formulation.

\

This paper is organised in two parts. The first part focuses on group theoretical results on the strong Massey vanishing property and unipotent representations. The second part studies arithmetic applications and proves our results. For the computations, we have used the program PARI/GP~\cite{PARI2}.

\subsection*{Notations} We fix a prime number~$p$. 

$\bullet$~For a~$\Z_p$-module~$A$, the number~$d_p A$ refers to the dimension of~$A/A^p$ over~$\F_p$, and~$\rk_{\Z_p} A$ denotes the~$\Z_p$-rank of~$A$, which is the dimension over~$\Q_p$ of~$\Q_p \otimes_{\Z_p} A$.

$\bullet$~If~$X$ is a set, we denote by~$|X|$ its cardinality.

$\bullet$~We denote by~$\Gamma$ a finitely presented pro-$p$ group.

$\bullet$~Almost all cohomology groups~$H^i(\Gamma,\F_p)$ have~$\F_p$-coefficients with trivial action so in those cases we simply write
~$H^i(\Gamma)$. 
We denote by~$d_p \Gamma  \coloneq  h^1(\Gamma)$ the generator rank of~$\Gamma$, and by~$h^2(\Gamma)$ its relation rank.

$\bullet$~Set~${\rm Frat}(\Gamma) \coloneq \Gamma^p[\Gamma,\Gamma]$ be the Frattini subgroup of~$\Gamma$. For every finitely generated pro-$p$ group homomorphism
~$\rho\colon \Gamma \to \Gamma'$, we denote by~$\rho^{\rm Frat}\colon \Gamma/\Frat (\Gamma)\to \Gamma'/\Frat(\Gamma')$, the induced homomorphism.


 \section{Group Theory}

\subsection{Massey vanishing property and liftings}

Let~$n \geq 3$ be an integer, and~$\mathbb{U}_{n+1}$ be the group of all upper-triangular unipotent~$(n + 1)\times (n + 1)$-matrices
with entries in~$\F_p$. We denote by~$Z_{n+1}$ the subgroup of~$\bU_{n+1}$ with all off-diagonal entries~$0$ except at position~$(1, n + 1)$: this subgroup is the center of~$\bU_{n+1}$ and is isomorphic to~$\F_p$. We define the quotient group~$\overline{\bU_{n+1}} \coloneq \bU_{n+1}/Z_{n+1}$, which can be seen as the class of matrices where the $(1,n+1)$-entries are formally removed.

 Let~$\psi\colon \bU_{n+1}\to \overline{\bU_{n+1}}$ be the canonical surjection, and define the maps
$$ \varphi  \colon \bU_{n+1} \to \F_p^n, \quad 
M \mapsto (M_{1,2}, \dots, M_{n,n+1}),\quad
\overline{\varphi} \colon \overline{\bU_{n+1}} \to \F_p^n, \quad 
\overline{M} \mapsto (\overline{M}_{1,2}, \dots, \overline{M}_{n,n+1}).$$

For each continuous group homomorphism $\rho : \Gamma \to \U_{n+1}$ and each $1 \leq i < j \leq n+1$, we denote by $\rho_{i,j}$ the $(i,j)$-th coordinate function : 
$$\rho_{i,j} : \Gamma \rightarrow \F_p, \quad g \mapsto \rho(g)_{i,j}.$$
We use similar notation
for homomorphisms~$\overline{\rho} : \Gamma \rightarrow \overline{\bU_{n+1}}$.
Note that~$\rho_{i,i+1}$ (resp.~$\overline{\rho}_{i,i+1}$) is a
group homomorphism for each $1 \leq i \leq n-1$.

\smallskip

By definition, we have the commutative diagram of groups:

$$\xymatrix{
1 \ar[r] &Z_{n+1}  \ar[r] & \bU_{n+1} \ar[r]^\psi \ar@{->>}[rd]_{\varphi} &  \overline{\bU}_{n+1}\ar@{->>}[d]^{\overline{\varphi}}\ar[r] &1  \\
& & & \F_p^n }
$$

\subsubsection{Massey vanishing properties}\label{MVP}
For each $n$-tuple $\chi \coloneq (\chi_1, \dots , \chi_n)$ of elements in~$H^1(\Gamma)$, we denote by $\theta_\chi$ the map :
$$\theta_\chi\colon \Gamma \to \F_p^n, \quad g\mapsto (\chi_1(g), \dots, \chi_n(g)).$$

\begin{defi}
We say that:
\begin{enumerate}
\item[$-$] The Massey product~$\langle \chi_1,\dots ,\chi_n\rangle$ is \textit{defined} if~$\theta_\chi$ lifts to~$\overline{\bU_{n+1}}$, i.e.\ there exists a morphism~$\overline{\rho}_\chi\colon \Gamma \to \overline{\mathbb{U}_{n+1}}$ such that~$\theta_\chi \coloneq \overline{\varphi}\circ \overline{\rho}_\chi$.
\item[$-$] The Massey product~$\langle \chi_1, \dots, \chi_n\rangle$ \textit{vanishes} if~$\theta_\chi$ lifts to~$\bU_{n+1}$, i.e.\ there exists a morphism~$\rho_\chi\colon \Gamma \to \mathbb{U}_{n+1}$ such that~$\theta_\chi=\varphi\circ \rho_\chi$.
\end{enumerate}
\end{defi}

The existence of a homomorphic lift of~$\theta_\chi$ to~$\overline{\bU_{n+1}}$  (which is the definition of the Massey product)
is related to the existence of a subset of~$H^2(\Gamma)$ (see~\cite[Theorem~$2.4$]{dwyer1975homology}), which is called the Massey product and denoted by~$\langle \chi_1,\dots, \chi_n\rangle$. Observe that if a Massey product~$\langle \chi_1,\dots, \chi_n \rangle$ vanishes, then it is necessarily defined. If the Massey product is defined, then an easy cohomological computation shows that $\chi_u\cup \chi_{u+1}=0$ for every~$1\leq u \leq n-1$.

\begin{defi}[Massey vanishing property]\label{defi-MVP}
We say that a pro-$p$ group~$\Gamma$
\begin{enumerate}
\item[$-$] satisfies the Massey vanishing property if every defined Massey product vanishes,
\item[$-$] satisfies the strong Massey vanishing property if for every $n$-tuple $(\chi_1,\dots, \chi_n)$ of elements~ in~$H^1(\Gamma)$ satisfying~$\chi_u\cup \chi_{u+1}=0$ for each~$1 \leq u \leq n-1$, the Massey product~$\langle\chi_1,\dots, \chi_n\rangle$ vanishes.
\end{enumerate}
\end{defi}


\subsubsection{Liftings and~$m$-strong Massey vanishing property}

 For every integer~$m\geq 1$, set~$\Z/p^m \coloneq \Z/p^m\Z$. Let
$\mathbb{U}_{n+1}(\Z/p^m)$ be the group of all upper-triangular unipotent~$(n + 1)\times (n + 1)$-matrices
with entries in~$\Z/p^m$. We define the surjective morphism
$$\varphi_m\colon \bU_{n+1}(\Z/p^m)\to \F_p^n; \quad M \mapsto (\overline{M_{1,2}}; \dots; \overline{M_{n-1,n}}),$$
where~$\overline{M_{i,j}}$ is the image of~$M_{i,j}$ modulo~$p^{m-1}(\Z/p^m)$. Note that when $m = 1$, the map $\varphi_1$ coincides with $\varphi$ defined previously.

Building on ideas from~\cite{conti2025liftinggaloisrepresentationskummer}, the second author, together with Min{\'a}{\v c}, Ramakrishna and Tân~\cite{maire2024strong}, also introduced the following group-theoretic property, which they studied in the context of (tame) absolute Galois groups:

\begin{defi}[$m$-strong Massey vanishing property]
We say that a pro-$p$ group~$\Gamma$ satisfies the~$m$-strong Massey vanishing property if, for every $n$-tuple $(\chi_1, \dots ,\chi_n)$ of elements  in~$H^1(\Gamma)^n$ satisfying~$\chi_i\cup \chi_{i+1}=0$ for each~$1 \leq i \leq n-1$, there exists a morphism~$\rho_{m, \chi}\colon \Gamma \to \bU_{n+1}(\Z/p^{m})$  such that the following diagram commutes:

\begin{center}
\begin{tikzcd}

\Gamma \arrow[rr, "\rho_{m,\chi}", dashed] \arrow[rrdd, "\theta_\chi"] &  & \mathbb{U}_{n+1}(\Z/p^m)\arrow[dd, "\varphi_m",two heads]           \\
                                                                                  &  &                                                  \\
                                                                                  &  & \F_p^n
\end{tikzcd}
\end{center}

\justifying
 We also say that~$\rho_{m,\chi}$ is a lifting of~$\theta_\chi$ (over~$\Z/p^m$).
\end{defi}

If~$m=1$, this is the usual strong Massey vanishing property.

We now use the~$m$-strong Massey vanishing property to lift unipotent surjections with coefficients in~$\F_p$ to unipotent surjections with coefficients in~$\Z/p^m$. 

\begin{lemm}\label{frat uni}
    For every integer~$m\geq 1$, the morphism induced by~$\varphi_m$ denoted
    $$\varphi_{m}^{\rm Frat}\colon \bU_{n+1}(\Z/p^m)/{\rm Frat}(\bU_{n+1}(\Z/p^m))\to  \F_p^n$$
    is an isomorphism.
\end{lemm}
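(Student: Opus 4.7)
The strategy is to exhibit $\varphi_m^{\rm Frat}$ as a surjective homomorphism between finite groups of the same order.

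First, I would observe that $\varphi_m$ is manifestly surjective: any tuple $(a_1,\dots,a_n) \in \F_p^n$ is hit by the matrix whose $(i,i+1)$-entry is a chosen lift $\tilde a_i \in \Z/p^m$ of $a_i$ and whose other off-diagonal entries vanish. Since $\F_p^n$ is elementary abelian of exponent $p$, the Frattini subgroup $\mathrm{Frat}(\bU_{n+1}(\Z/p^m)) = \bU_{n+1}(\Z/p^m)^p\,[\bU_{n+1}(\Z/p^m),\bU_{n+1}(\Z/p^m)]$ is automatically contained in $\ker \varphi_m$, so $\varphi_m$ descends to a surjective homomorphism $\varphi_m^{\rm Frat}$.

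Next, I would compute the order of the Frattini quotient using two standard identities. Write $U=\bU_{n+1}(\Z/p^m)$ and let $U_k \subseteq U$ denote the subgroup of matrices whose first $k-1$ superdiagonals vanish, so that $U_1=U$, $U_{n+1}=\{I\}$, and $U_2$ is precisely the kernel of the first-superdiagonal map $U \to (\Z/p^m)^n$. The commutator identity
$$[I + a\, e_{i,j},\, I + b\, e_{j,\ell}] \equiv I + ab\, e_{i,\ell} \pmod{U_{\ell-i+1}},$$
together with the formal inclusion $[U,U] \subseteq U_2$, yields $[U,U]=U_2$ by an induction on the superdiagonal level. Consequently, the first-superdiagonal map induces an isomorphism $U^{\mathrm{ab}} \xrightarrow{\sim} (\Z/p^m)^n$. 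Separately, for $M = I+N$ with $N$ strictly upper triangular, the binomial expansion $(I+N)^p = \sum_{k\geq 0}\binom{p}{k}N^k$, combined with the fact that $N^k$ has support on superdiagonals of level $\geq k$, shows $(I+N)^p \equiv I+pN \pmod{U_2}$; hence the image of $U^p$ in $U^{\mathrm{ab}}=(\Z/p^m)^n$ is exactly $p(\Z/p^m)^n$. Combining both steps,
$$U/\mathrm{Frat}(U) \;\cong\; (\Z/p^m)^n / p(\Z/p^m)^n \;\cong\; \F_p^n,$$
so the source and target of $\varphi_m^{\rm Frat}$ have equal cardinality $p^n$, and a surjection between finite groups of equal order is an isomorphism.

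The only nontrivial step is the commutator identity $[U,U]=U_2$: the inclusion $\subseteq$ is immediate since superdiagonal entries are additive modulo $U_2$, and the reverse inclusion reduces to producing, for each $i<j$ and each $c \in \Z/p^m$, a commutator whose $(i,j)$-entry is $c$ while all entries on strictly lower superdiagonals vanish, which is achieved by the identity above and a routine downward induction on $j-i$.
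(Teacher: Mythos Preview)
Your proof is correct and follows essentially the same approach as the paper: both arguments compute the commutator subgroup $[U,U]=U_2$ (the matrices with vanishing first superdiagonal) and determine the contribution of $U^p$ modulo $U_2$, arriving at $\mathrm{Frat}(U)$ consisting exactly of those matrices whose first superdiagonal lies in $p(\Z/p^m)$. The only cosmetic difference is packaging: the paper identifies $\ker\varphi_m$ directly with $\mathrm{Frat}(U)$, whereas you deduce $|U/\mathrm{Frat}(U)|=p^n$ and conclude via a cardinality argument, but the substantive computations are identical.
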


\begin{proof}
Clearly~${\rm Frat}(\F_p^n)=0$.
    Let us denote by~${\rm I}_{n+1}$ the identity matrix and by~$\epsilon_{i,j}$ the elementary~$(n+1)\times (n+1)$-matrix which is equal to one in~$(i,j)$, and zero everywhere else. An easy computation shows that:
 $$\lbrack \bU_{n+1}(\Z/p^m),\bU_{n+1}(\Z/p^m)\rbrack =\langle {\rm I}_{n+1}+  a  \epsilon_{i,j}; \quad a \in \Z/p^m \text{ and } j-i\geq 2 \rangle.$$
Furthermore,
$$\bU_{n+1}(\Z/p^m)^p \cdot [\bU_{n+1}(\Z/p^m),\bU_{n+1}(\Z/p^m)] =P +[\bU_{n+1}(\Z/p^m),\bU_{n+1}(\Z/p^m)],$$
where $P$ is the additive subgroup generated by $\{ pb\epsilon_{i,j} ; \quad b\in \Z/p^m, j-i=1\}$.

    This implies that the kernel of~$\varphi_m$ is exactly~${\rm Frat}(\bU_{n+1}(\Z/p^m))$. Thus~$\varphi_m^{\rm{Frat}}$ is an isomorphism.
\end{proof}

As an application, we infer:

\begin{prop}\label{surj and strong}
Assume that~$\Gamma$ satisfies the~$m$-strong Massey vanishing property. Suppose moreover  that there exists  an $n$-tuple ~$\chi \coloneq (\chi_1, \dots, \chi_n)$ in~$H^1(\Gamma)^n$ satisfying the following conditions:
\begin{enumerate}
\item[$-$] the map~$\theta_\chi\colon \Gamma \to \F_p^n$ is surjective,
\item[$-$] for every~$1\leq u\leq n-1$, we have~$\chi_u\cup \chi_{u+1}=0$.
\end{enumerate}
Then there exists a surjective homomorphism~$\rho_{m,\chi}\colon \Gamma \to \bU_{n+1}(\Z/p^m)$ lifting~$\theta_\chi$. Moreover, all such liftings are surjective.
\end{prop}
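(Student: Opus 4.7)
The existence part is immediate: the hypothesis that~$\chi_u\cup \chi_{u+1}=0$ for every~$1\leq u\leq n-1$ is exactly the hypothesis needed to invoke the~$m$-strong Massey vanishing property, which directly produces a homomorphism~$\rho_{m,\chi}\colon \Gamma \to \bU_{n+1}(\Z/p^m)$ such that~$\varphi_m\circ \rho_{m,\chi} = \theta_\chi$. So the content of the statement is really the surjectivity claim, applied to an arbitrary such lift.

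To prove surjectivity, my plan is to reduce to the Frattini quotient. Since~$\bU_{n+1}(\Z/p^m)$ is a (finite) pro-$p$ group, the Burnside basis theorem tells us that a continuous homomorphism~$\rho\colon \Gamma \to \bU_{n+1}(\Z/p^m)$ is surjective if and only if the induced map
\[
\rho^{\rm Frat}\colon \Gamma/\Frat(\Gamma) \longrightarrow \bU_{n+1}(\Z/p^m)/\Frat(\bU_{n+1}(\Z/p^m))
\]
is surjective. So it suffices to check surjectivity after passing to Frattini quotients.

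Now I would use the commutativity~$\varphi_m\circ \rho_{m,\chi} = \theta_\chi$ together with Lemma~\ref{frat uni}. Passing to Frattini quotients (and noting that~$\F_p^n$ equals its own Frattini quotient) gives a commutative triangle
\[
\varphi_m^{\rm Frat}\circ \rho_{m,\chi}^{\rm Frat} = \theta_\chi.
\]
Since~$\theta_\chi$ is assumed surjective and~$\varphi_m^{\rm Frat}$ is an isomorphism by Lemma~\ref{frat uni}, the map~$\rho_{m,\chi}^{\rm Frat}$ must be surjective. By the Burnside basis theorem,~$\rho_{m,\chi}$ itself is surjective. This argument never used anything special about the particular lift, so the conclusion applies to every lift of~$\theta_\chi$ through~$\varphi_m$.

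I do not anticipate a real obstacle here: the proof is essentially a formal consequence of Lemma~\ref{frat uni} combined with the pro-$p$ Burnside basis theorem, once the~$m$-strong Massey vanishing property is invoked to produce the lift. The only small point to keep clean is that~$\Frat(\F_p^n)=0$, so the Frattini quotient of~$\theta_\chi$ is just~$\theta_\chi$ itself, which is what makes the triangular diagram descend correctly.
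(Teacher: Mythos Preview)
Your proof is correct and follows essentially the same argument as the paper: invoke the $m$-strong Massey vanishing property to produce a lift, then use Lemma~\ref{frat uni} together with the Burnside basis theorem to deduce surjectivity from the surjectivity of~$\theta_\chi$. Your explicit remark that the argument applies to any lift, and your observation that~$\Frat(\F_p^n)=0$ ensures the diagram descends cleanly, match the paper's reasoning exactly.
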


\begin{proof}
    From the~$m$-strong Massey vanishing property, there exists~$\rho_{m,\chi}\colon \Gamma \to \bU_{n+1}(\Z/p^m)$ which lifts~$\theta_\chi$ in~$\Z/p^m$. Let us recall from Lemma~\ref{frat uni} that we have the isomorphism $$\varphi_m^{\rm Frat}\colon \bU_{n+1}(\Z/p^m)/{\rm Frat}(\bU_{n+1}(\Z/p^m))\simeq \F_p^n.$$
Thus we infer the commutative diagram:

\centering
\begin{tikzcd}
\Gamma/{\rm Frat}(\Gamma)\simeq \F_p^d \arrow[rrdd, "\theta_\chi^{\rm Frat}"', two heads] \arrow[rr, "\rho_{m,\chi}^{\rm Frat}", dashed] &  & \mathbb{U}_{n+1}(\Z/p^m)/{\rm Frat}(\mathbb{U}_{n+1}(\Z/p^m)) \arrow[dd, "\varphi_m^{\rm Frat}"] \\
                                                                                                                                         &  &                                                                                                        \\
                                                                                                                                         &  & \F_p^n
\end{tikzcd}

Since~$\theta_\chi$ is surjective and~$\varphi_m^{\rm Frat}$ is an isomorphism, then the map~$\rho_{m,\chi}^{\rm Frat}$ is surjective. By Burnside Lemma, we conclude that the map~$\rho_{m,\chi}$ is also surjective.
\end{proof}


\subsection{Mildness}\label{Def Mild}
Let us consider a finitely presented pro-$p$ group~$\Gamma$ with a presentation (not necessarily minimal):
$$(P_\Gamma) \coloneq \langle x_1,\dots x_d \mid  l_1,\dots ,l_r\rangle.$$
Alternatively, we have a presentation~$1\to R \to F \to \Gamma \to 1$, where~$F$ is pro-$p$ free on~$d$ generators and~$R$ is the closed normal subgroup of~$F$ generated by the~$l_i$'s.

We denote by~$E(\Gamma)$ the completed group ring of~$\Gamma$ over~$\F_p$ filtered by~$E_n(\Gamma)$, the~$n$-th power of the augmentation ideal.

The Magnus isomorphism (see~\cite[Chapitre II,~$3.1.4$ and Appendice~$A.3$]{LAZ}) provides an isomorphism~$\phi$ of filtered algebras between~$E(F)$ and~$E \coloneq \F_p\langle \langle X_1,\dots, X_d \rangle \rangle$, the algebra of noncommutative series in~$X_1, \dots X_d$ over~$\F_p$, where every~$X_i$ is assigned degree~$1$. The isomorphism $\phi$ is characterized by $\phi(x_i) \coloneq X_i+1$.

Let us choose an order~$\succ_X$ on~$\{X_1,\dots, X_d\}$,  that we extend to an order on monomials on~$E$. To fix the ideas, we take the order~$X_d \succ_X X_{d-1}\succ_X \dots \succ_X X_1$. This is always possible after relabeling the $X_i$'s. 

We define~$\widehat{l_i}$ as the leading monomial of the series~$\phi(l_i)-1$.

\begin{defi}[Mild groups]
    In this paper, we say that the presentation~$(P_\Gamma)$ is (quadratic) mild (for an order~$\succ_X$) if for every~$i$, we have~$\widehat{l_i} \coloneq X_{i_2}X_{i_1}$ (with~$i_2>i_1$), and for every~$i,j$, we have~$X_{i_2}\neq X_{j_1}$.
\end{defi}

This is (a special case of) the notion used by
 Labute in~\cite[$\S 1$]{Labute} to produce examples of pro-$p$ group~$G_{S}$ of cohomological dimension~$2$.

\begin{prop}
If~$(P_\Gamma)$ is mild, then the presentation is minimal and~$\Gamma$ has cohomological dimension~$2$.
\end{prop}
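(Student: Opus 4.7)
The plan is to pass through the Magnus embedding to reduce the question to a combinatorial statement in the free associative algebra $E = \F_p\langle\langle X_1,\dots,X_d\rangle\rangle$, and then invoke the classical theorem of Anick--Labute on strongly free sequences.

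First I would handle minimality of the generators. Since $\widehat{l_i}$ has degree $2$, the Magnus expansion $\phi(l_i)-1$ lies in $E_2(F)$, hence $l_i\in F^p[F,F]=\Frat(F)$. The images of $x_1,\dots,x_d$ therefore form a basis of $\Gamma/\Frat(\Gamma)$ over $\F_p$, which gives $d = d_p(\Gamma) = h^1(\Gamma)$, so the generating set is minimal.

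For the relation rank and the cohomological dimension, let $q_i$ denote the degree-$2$ part of $\phi(l_i)-1$, so that $\widehat{l_i}$ is the leading monomial of $q_i$. The hypothesis $X_{i_2}\neq X_{j_1}$ for all $i,j$ is precisely the condition that no two length-$2$ leading monomials overlap (the last letter of one never matches the first letter of another), which is Anick's combinatorial freeness criterion. Applying it, the sequence $(q_1,\dots,q_r)$ is \emph{strongly free} in $\F_p\langle X_1,\dots,X_d\rangle$. From strong freeness I would extract both remaining conclusions. On the one hand, $\F_p$-linear independence of the $q_i$'s transfers via the Magnus isomorphism to linear independence of the images of the $l_i$'s in $R/R^p[F,R]$, the $\F_p$-dual of $H^2(\Gamma)$, proving $r = h^2(\Gamma)$ and completing minimality. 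On the other hand, Anick's construction gives a free resolution of $\F_p$ of length $2$ over $\mathrm{gr}\,E(\Gamma) \cong \F_p\langle X_1,\dots,X_d\rangle/(q_1,\dots,q_r)$, so this graded algebra has global dimension $2$; the spectral sequence associated to the augmentation filtration on $E(\Gamma)$ then transports the vanishing to $H^i(\Gamma,\F_p)=0$ for $i\geq 3$, whence $\cd\,\Gamma = 2$.

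The main obstacle is Anick's theorem itself: turning the purely combinatorial non-overlap condition into the algebraic statement of strong freeness, and from there into the vanishing of higher cohomology of the associated graded ring, rests on a nontrivial Gröbner-basis / rewriting argument. Rather than reproduce it, I would invoke it as a black box, citing Labute's original treatment of mild pro-$p$ groups in \cite{Labute}; the remaining ingredients (Magnus translation of the relation module and the augmentation-filtration spectral sequence) are standard.
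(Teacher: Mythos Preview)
Your proposal is correct and aligns with the paper's own treatment: the paper's proof consists entirely of the reference ``See~\cite[Theorem~$5.1$]{Labute}'', and your sketch unpacks precisely what lies behind that citation (Magnus map, Anick's combinatorial criterion for strong freeness, and the passage from the associated graded to~$\cd\Gamma=2$), ultimately invoking the same black box.
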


\begin{proof}
See~\cite[Theorem~$5.1$]{Labute}.
\end{proof}

\begin{rema}[Koszulity and Mildness]
    Following our definition, using~\cite[Theorem~$5.1$]{Labute} and~\cite[Proposition~$1$]{hamza2023extensions}, we can easily show that if the group~$\Gamma$ admits a mild presentation, then the algebra~$H^\bullet(\Gamma)$ is Koszul. Positselski~\cite{positselski2014galois} conjectured that pro-$p$ absolute Galois groups of fields containing the $p$-th roots of the unity has Koszul cohomology ring, which is a stronger property than the Bloch-Kato conjecture. This conjecture was investigated by Min{\'a}{\v c} and his collaborators in~\cite{minac2021koszul} and~\cite{minavc2022mild}. 
\end{rema}

\

\subsection{The property~$(\PP_m)$ and the class~$\D_m$}  \label{property P}   Set~$m\geq 1$.
We define the property~$(\PP_m)$ that we study for the rest of the paper:
\begin{defi}
We say that a group~$\Gamma$ satisfies the property~$(\PP_m)$ if:
\begin{itemize}
\item[$(i)$] the group~$\Gamma$ checks the~$m$-strong Massey vanishing property,
\item[$(ii)$] the group~$\Gamma$ admits a mild quadratic presentation or is free,
\item[$(iii)$] every open subgroup of~$\Gamma$ checks~$(i)$ and~$(ii)$.
\end{itemize}
\end{defi}

We now define the class~$\D_m$ of pro-$p$ absolute Galois groups~$\Gamma$ of local fields with characteristic of residue fields different from~$p$, which either:

$-$ do not contain the~$p$-th roots of the unity, so~$\Gamma \simeq \Z_p$~\cite[Theorem~$10.1$]{Koch},

$-$ contain the~$p^m$-th roots of the unity, so~$\Gamma$ is Demushkin of rank~$2$~\cite[Theorem~$10.2$]{Koch}.

\smallskip
Unipotent representations on this class were studied by Conti, Demarche and Florence in~\cite{conti2025liftinggaloisrepresentationskummer}. We easily observe that if~$m\geq m'$ then~$\D_m$ is a subclass of~$\D_{m'}$. 

\begin{prop}\label{DmPPm}
    The class~$\D_m$ checks the property~$(\PP_m)$.
\end{prop}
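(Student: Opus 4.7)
The plan is to verify conditions (i), (ii), and (iii) of property $(\PP_m)$ separately on the two subclasses of $\D_m$: the case $\Gamma \simeq \Z_p$ (when $\zeta_p \notin K$), and the case $\Gamma$ Demushkin of rank $2$ (when $\zeta_{p^m} \in K$).

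In the case $\Gamma \simeq \Z_p$, condition (ii) is immediate since $\Z_p$ is pro-$p$ free on a single generator. For (i), I would invoke the universal property of the free pro-$p$ group of rank one: given an $n$-tuple $(\chi_1,\dots,\chi_n) \in H^1(\Gamma)^n$, fix a topological generator $\gamma$, lift each $\chi_i(\gamma) \in \F_p$ to some $c_i \in \Z/p^m$, and define $\rho_{m,\chi}(\gamma)$ to be the unipotent matrix with $(i,i+1)$-entries equal to $c_i$ and all other off-diagonal entries zero; this extends uniquely to a continuous homomorphism $\Gamma \to \U_{n+1}(\Z/p^m)$, and $\varphi_m \circ \rho_{m,\chi} = \theta_\chi$ by direct inspection. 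The cup-product hypotheses are vacuous here since $H^2(\Z_p)=0$. For (iii), an open subgroup of $\Gamma$ is $G_L$ for a finite $p$-extension $L/K$; since $[K(\zeta_p):K]$ divides $p-1$ and is coprime to $p$, the field $L$ does not contain $\zeta_p$ either, so $G_L \simeq \Z_p$ and the same arguments apply verbatim.

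In the Demushkin case, I would begin from the standard presentation $\Gamma = \langle x_1, x_2 \mid x_1^q [x_1, x_2] \rangle$, with $q = p^f$ the Demushkin invariant (or the exceptional variant at $p = 2$). Computing $\phi(r) - 1 = X_1^q + (X_1 X_2 - X_2 X_1) + \text{(higher-degree terms)}$ in the Magnus embedding, and taking the order $X_2 \succ_X X_1$, the leading monomial is $\widehat{l_1} = X_2 X_1$, which is of the form $X_{i_2} X_{i_1}$ with $i_2 = 2 \neq 1 = i_1$, so the presentation is mild and (ii) holds. Condition (i) is the lifting statement from characters to $\U_{n+1}(\Z/p^m)$ for pro-$p$ absolute Galois groups of local fields containing $\zeta_{p^m}$, which I would extract directly from the main results of Conti, Demarche and Florence~\cite{conti2025liftinggaloisrepresentationskummer}. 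For (iii), any open subgroup is $G_L$ for a finite $p$-extension $L/K$, and $L$ still contains $\zeta_{p^m}$; hence $G_L$ is Demushkin of some rank $n \geq 2$, (i) holds again by Conti-Demarche-Florence, and (ii) follows from the analogous leading-term computation on the single Demushkin relation under $X_n \succ_X \cdots \succ_X X_1$, whose degree-$2$ part still contains the commutator term $X_n X_{n-1}$ as top monomial.

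I expect the main obstacle to be two-fold: first, cleanly packaging the lifting theorem of Conti-Demarche-Florence into exactly the form of the $m$-strong Massey vanishing property demanded here (checking that its hypotheses and output match Definition of $m$-strong Massey vanishing verbatim); and second, handling the exceptional $p = 2$ Demushkin presentations (those involving an $X_i^2$ or $x_2^{2^f}$ term in the relation) so that, after a suitable relabelling of generators and choice of order~$\succ_X$, the leading monomial still has two distinct variables and the mild quadratic condition of~\S\ref{Def Mild} is genuinely met in every case.
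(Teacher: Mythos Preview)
Your plan is essentially the paper's, but two points deserve correction.

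First, for condition~$(i)$ in the Demushkin case you propose to extract the $m$-strong Massey vanishing property \emph{directly} from Conti--Demarche--Florence. This does not quite work, and your anticipated obstacle is real: the Corollary in \cite{conti2025liftinggaloisrepresentationskummer} lifts an existing homomorphism $\rho_\chi\colon \Gamma \to \U_{n+1}(\F_p)$ to $\rho_{m,\chi}\colon \Gamma \to \U_{n+1}(\Z/p^m)$; it does not produce $\rho_\chi$ from the characters $\theta_\chi$ alone. The paper closes this gap by a two-step argument: first invoke \cite[Proposition~4.1]{minavc2017counting} to obtain the ordinary strong Massey vanishing (so $\rho_\chi$ exists once $\chi_u\cup\chi_{u+1}=0$), and only then apply Conti--Demarche--Florence to lift $\rho_\chi$ modulo $p^m$. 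You should make this intermediate step explicit.

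Second, your treatment of~$(iii)$ introduces an unnecessary complication. Since the class $\D_m$ consists of pro-$p$ absolute Galois groups of local fields with residue characteristic \emph{different from} $p$, any finite $p$-extension $L/K$ again has residue characteristic prime to $p$, so $G_L$ is Demushkin of rank exactly~$2$ (or $\Z_p$), never of higher rank. In other words, open subgroups of $\Gamma$ remain in $\D_m$ (this is \cite[Proposition~7.5.9]{NSW}), and you may simply recycle the arguments for~$(i)$ and~$(ii)$ verbatim. There is no need to analyse higher-rank Demushkin relations or to worry about exceptional $p=2$ presentations beyond the rank-$2$ case.
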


\begin{proof}
    Take~$\Gamma$ in~$\D_m$. We check~$(i)$,~$(ii)$ and~$(iii)$. If~$\Gamma\simeq \Z_p$, this is clear. So we assume that~$\Gamma\neq \Z_p$.

$(i)$ From~\cite[Proposition $4.1$]{minavc2017counting}, we observe that every group in~$\D_m$ checks the strong Massey vanishing property.
Let~$(\chi_1,\dots \chi_n)$ be an ~$n$-tuple in~$H^1(\Gamma)^n$ satisfying~$\chi_u\cup \chi_{u+1}=0$. By the strong Massey vanishing property, there exists~$\rho_\chi\colon \Gamma \to \bU_{n+1}$ lifting~$\theta_\chi$. Using~\cite[Corollary]{conti2025liftinggaloisrepresentationskummer}, we infer a morphism~$\rho_{m,\chi}\colon \Gamma \to \bU_{n+1}(\Z/p^m)$ which lifts~$\rho_\chi$, thus lifts~$\theta_\chi$. Consequently~$\Gamma$ satisfies the~$m$-strong Massey vanishing property.

   $(ii)$ Using~\cite[Theorem~$10.2$]{Koch}, we observe that~$\Gamma$ admits a presentation with two generators~$x_2$,~$x_1$ and one relation~$l_1$ which checks~$\widehat{l_1}=X_2X_1$, for the order~$X_2>X_1$, so it is mild.

   $(iii)$ 
   Every open subgroup~$H$ of~$\Gamma$ is in~$\D_m$ (see for example \cite[Proposition 7.5.9, Chapter VII]{NSW}), so satisfies~$(i)$ and~$(ii)$.
\end{proof}

\subsection{Coproducts and the class~$\overline{\D_m}$}

We denote by~$\coprod$ the coproduct in the category of pro-$p$ groups: this is the pro-$p$ completion of the abstract free product in groups. For further details, we refer the reader to~\cite[Chapter~$9$]{ZAL} and~\cite[Chapter IV]{NSW}.

Let~$\{G_1, \dots, G_k\}$ be a family of finitely generated pro-$p$ groups, and set~$G \coloneq \coprod_{i=1}^k G_i$. We observe that we have natural injective morphisms~$\iota_j\colon G_j\to G$ for every~$1\leq j\leq k$, and we infer:

\begin{prop}[Universal property]\label{cohomology coprod}
For every pro-$p$ group~$M$ and family of maps~$\{ \, \rho_j \colon G_i \to M\}_{1 \leq j \leq k}$, there exists a unique map:
$$\rho \colon G \to M,$$
that we denote by~$\rho \coloneq \coprod_{i=1}^k \rho_i$, such that for each~$j$ the following diagram commutes:

\centering{ 
\begin{tikzcd}
G_j \arrow[rd, "\rho_j"] \arrow[rr, "\iota_j", hook] &   & G \arrow[ld, "\rho", dashed] \\
                                                     & M &
\end{tikzcd}}
\end{prop}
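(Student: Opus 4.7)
The plan is to deduce this statement from two standard facts that are essentially built into the definition of the pro-$p$ coproduct: the universal property of the abstract free product of groups, and the universal property of pro-$p$ completion. Recall from \cite[Chapter~9]{ZAL} that, by construction, $G = \coprod_{i=1}^k G_i$ is the pro-$p$ completion $\widehat{F}$ of the abstract free product $F \coloneq G_1 \ast \cdots \ast G_k$, and the injections $\iota_j$ factor as $G_j \hookrightarrow F \to \widehat{F} = G$.

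For existence, I would first apply the universal property of the abstract free product to the family $\{\rho_j\}_{1\leq j\leq k}$ to obtain a unique abstract group homomorphism $\tilde{\rho}\colon F \to M$ with $\tilde{\rho}|_{G_j} = \rho_j$ for each $j$. The key step is then to show that $\tilde{\rho}$ extends continuously to $G = \widehat{F}$. Writing $M = \varprojlim_N M/N$, indexed by the open normal subgroups $N \triangleleft M$ of $p$-power index, it suffices to check that for each such $N$ the composition $F \xrightarrow{\tilde{\rho}} M \twoheadrightarrow M/N$ factors through a finite $p$-group quotient of $F$. This is immediate: the image of $F$ in $M/N$ is generated by the images of the subgroups $G_j$, so it is contained in the finite $p$-group $M/N$, hence the kernel of $F \to M/N$ has finite $p$-power index in $F$ and therefore contains an open normal subgroup of the pro-$p$ completion $\widehat{F} = G$. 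Assembling these factorizations over all $N$ via the universal property of the inverse limit yields the desired continuous homomorphism $\rho \colon G \to M$ with $\rho \circ \iota_j = \rho_j$.

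For uniqueness, I would use the fact that the canonical image of $F$ inside its pro-$p$ completion $\widehat{F} = G$ is dense. Any continuous homomorphism $\rho'\colon G \to M$ satisfying $\rho' \circ \iota_j = \rho_j$ is determined on each $\iota_j(G_j)$, hence on the subgroup they generate (the image of $F$), hence on all of $G$ by density and continuity, so $\rho' = \rho$.

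The main point to verify is the factorization step, which is really the only non-formal ingredient; but as noted above it follows instantly from the fact that $M/N$ is a finite $p$-group. Everything else is a formal chase through universal properties, and no genuine obstacle is expected.
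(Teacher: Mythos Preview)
Your argument is correct and is precisely the standard construction of the pro-$p$ free product via completion of the abstract free product. The paper itself gives no proof but simply refers to \cite[Proposition~9.1.2]{ZAL}, where this construction is carried out, so your sketch matches the intended reference.
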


\begin{proof}
See~\cite[Proposition~$9.1.2$]{ZAL}.
\end{proof}

Let us observe that we have a map 
$${\rm Res}_1\colon H^1(G)\to \bigoplus_{i=1}^k H^1(G_i), \quad \chi_u \mapsto (\chi_u \circ \iota_1,\dots, \chi_u \circ \iota_k).$$
By Proposition~\ref{cohomology coprod} this map is bijective, and we identify every element~$\chi$ in~$H^1(G_i)$ with  an element~$\widetilde{\chi}$ in~$H^1(G)$ verifying~$\widetilde{\chi}(\iota_i(g))=\chi(g)$ for every~$g\in  G_i$, and~$\widetilde{\chi}(\iota_j(G_j))=0$ for~$j\neq i$.

\begin{prop}\label{cohomology of coprod}
    For every integer~$n\geq 1$, we have an isomorphism
    $${\rm Res}_n\colon H^n(G)\simeq \bigoplus_{i=1}^k H^n(G_i)$$
    Furthermore for every~ pair $i\neq j$, the image of the following map is trivial:
    $$\cup\colon H^1(G_i)\times H^1(G_j)\to H^2(G), \quad (\chi_1, \chi_2)\mapsto \widetilde{\chi_1}\cup \widetilde{\chi_2}.$$ 
\end{prop}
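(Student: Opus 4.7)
The plan is to establish the cohomology isomorphism first and then deduce the cup product vanishing as a formal consequence.

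For $\mathrm{Res}_1$, I would invoke Proposition \ref{cohomology coprod} with $M = \F_p$: a continuous homomorphism $G \to \F_p$ is uniquely determined by its restrictions to each $G_i$, and any tuple of such homomorphisms assembles uniquely into one, so $\mathrm{Res}_1$ is bijective by construction. For $\mathrm{Res}_n$ with $n \geq 2$, the approach is to invoke the Mayer--Vietoris type long exact sequence attached to a coproduct of pro-$p$ groups. Reducing to the case $k=2$ by iteration via $\coprod_{i=1}^k G_i \simeq (\coprod_{i=1}^{k-1} G_i) \coprod G_k$, one obtains an exact sequence
$$\cdots \to H^{n-1}(\{1\}) \to H^n(G_1 \coprod G_2) \to H^n(G_1) \oplus H^n(G_2) \to H^n(\{1\}) \to \cdots$$
As $H^n(\{1\}) = 0$ for every $n \geq 1$, the map $\mathrm{Res}_n$ is an isomorphism for $n \geq 2$. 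The $n=1$ case is also consistent with this sequence: the map $H^0(G_1) \oplus H^0(G_2) \to H^0(\{1\})$ (difference of augmentations $\F_p \oplus \F_p \to \F_p$) is already surjective, so the connecting map vanishes. I would cite \cite[Chapter IV]{NSW} and \cite[Chapter 9]{ZAL} for the existence of this long exact sequence rather than reproving it.

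For the second assertion, fix $i \neq j$ and let $\chi_1 \in H^1(G_i)$, $\chi_2 \in H^1(G_j)$. By the identification preceding the proposition, $\iota_\ell^* \widetilde{\chi_1}$ equals $\chi_1$ when $\ell = i$ and vanishes otherwise, and symmetrically $\iota_\ell^* \widetilde{\chi_2} = \chi_2$ when $\ell = j$ and $0$ otherwise. Naturality of the cup product then gives, for every $\ell \in \{1, \dots, k\}$,
$$\iota_\ell^*(\widetilde{\chi_1} \cup \widetilde{\chi_2}) = \iota_\ell^*(\widetilde{\chi_1}) \cup \iota_\ell^*(\widetilde{\chi_2}) = 0,$$
since at least one factor on the right vanishes (the first whenever $\ell \neq i$, the second whenever $\ell = i$, using $i \neq j$). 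Applying the isomorphism $\mathrm{Res}_2$ established above yields $\widetilde{\chi_1} \cup \widetilde{\chi_2} = 0$.

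The only genuinely nontrivial ingredient is the Mayer--Vietoris sequence for pro-$p$ coproducts. In the discrete setting it arises from the action of a free product on its Bass--Serre tree; its pro-$p$ analogue relies on the theory of pro-$p$ groups acting on pro-$p$ trees developed by Ribes and Zalesski. I would simply quote this result from the standard references rather than reproduce the construction, as the second assertion then follows formally from naturality and the explicit description of $\widetilde{\chi}$.
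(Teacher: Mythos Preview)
Your proof is correct. For the isomorphism $\mathrm{Res}_n$, both you and the paper appeal to standard references: the paper invokes \cite[Theorem~(4.1.4)]{NSW} directly, while you sketch the Mayer--Vietoris route; these are essentially the same result packaged differently.

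For the cup-product vanishing the approaches genuinely differ. You argue formally: restriction along each $\iota_\ell$ is a ring homomorphism, so $\iota_\ell^*(\widetilde{\chi_1}\cup\widetilde{\chi_2})=\iota_\ell^*\widetilde{\chi_1}\cup\iota_\ell^*\widetilde{\chi_2}$ vanishes for every $\ell$ since one factor is always zero when $i\neq j$, and then injectivity of $\mathrm{Res}_2$ forces $\widetilde{\chi_1}\cup\widetilde{\chi_2}=0$. The paper instead exhibits an explicit lift to $\U_3$: it defines $\rho_1\colon G_1\to\U_3$ carrying $\chi_1$ on the $(1,2)$-entry and $\rho_2\colon G_2\to\U_3$ carrying $\chi_2$ on the $(2,3)$-entry, coproducts them via the universal property to get $\rho\colon G\to\U_3$ with $\rho_{1,2}=\widetilde{\chi_1}$ and $\rho_{2,3}=\widetilde{\chi_2}$, and reads off $\widetilde{\chi_1}\cup\widetilde{\chi_2}=0$ from the existence of this lift. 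Your argument is shorter and purely cohomological; the paper's is chosen deliberately (it even says the standard proof is ``well known'' and offers this alternative ``inspired by~\cite[Lemma~4.7]{minavc2017counting}'') to foreshadow the unipotent-representation viewpoint that drives the Massey-product arguments later in the paper.
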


\begin{proof}
The isomorphism comes from~\cite[Theorem~$(4.1.4)$]{NSW} and the previous discussion.
The computation on the coproduct is also well known, 
but let us propose an alternative proof using unipotent representations, inspired by~\cite[Lemma~$4.7$]{minavc2017counting}. The proof follows easily by induction from the case~$G \coloneq G_1\coprod G_2$.  Take~$\chi_1$ in~$H^1(G_1)$ and~$\chi_2$ in~$H^1(G_2)$. We define morphisms~$\rho_1\colon G_1\to \bU_3$ and~$\rho_2\colon G_2 \to \bU_3$ by:

\[
\rho_1 \colon g_1 \mapsto 
\begin{bmatrix}
1 & \chi_1(g_1) & 0 \\
0 & 1 & 0 \\
0 & 0 & 1
\end{bmatrix}
\quad \text{and} \quad
\rho_2 \colon g_2 \mapsto 
\begin{bmatrix}
1 & 0 & 0 \\
0 & 1 & \chi_2(g_2) \\
0 & 0 & 1
\end{bmatrix}
\]

Then using the universal property, we infer a morphism~$\rho \coloneq (\rho_1\coprod \rho_2)\colon G\to \bU_3$ which satisfies~$\rho_{1,2}=\widetilde{\chi_1}$ and~$\rho_{2,3}=\widetilde{\chi_2}$. Furthermore, an easy computation implies~$\widetilde{\chi_1}\cup \widetilde{\chi_2}=0$. 
\end{proof}

\subsubsection{Mildness}
Now let us assume that, for each~$i = 1, \dots, k$, the group~$G_i$ is either free or admits a mild presentation~$(P_{G_i})$. In this case, the universal property implies the following:

\begin{coro}\label{mild coprod}
    The group~$G \coloneq \coprod_{i=1}^k G_i$ either admits a mild presentation, or is pro-$p$ free.
\end{coro}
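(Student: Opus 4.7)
The plan is to split on whether every $G_i$ is free, and otherwise to stitch together the individual mild presentations into a global presentation of $G$ by concatenating generators and relations with a carefully chosen order.

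First I would dispose of the case in which all $G_i$ are pro-$p$ free. Since the coproduct of free pro-$p$ groups is again free pro-$p$ (see \cite[Proposition~9.1.1]{ZAL}), the corollary is trivial in this case. So I would assume at least one $G_i$ admits a mild presentation $(P_{G_i}) = \langle x^{(i)}_1,\dots,x^{(i)}_{d_i} \mid l^{(i)}_1,\dots,l^{(i)}_{r_i}\rangle$, with the convention that $r_i=0$ when $G_i$ is free. By the universal property (Proposition~\ref{cohomology coprod}) applied to the free pro-$p$ group on the disjoint union of generators and then to the quotient by the disjoint union of relators, the coproduct $G$ admits the presentation
$$(P_G) \coloneq \Big\langle \, x^{(i)}_a \; (1\leq i\leq k,\ 1\leq a\leq d_i) \;\Big|\; l^{(i)}_j \; (1\leq i\leq k,\ 1\leq j\leq r_i) \,\Big\rangle.$$

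Next I would equip the variables $X^{(i)}_a$ of the Magnus algebra $E \coloneq \F_p\langle\langle X^{(i)}_a\rangle\rangle$ with the lexicographic-style order that puts all of block $i$ above block $i-1$, and within block $i$ uses the mild order $\succ_{X^{(i)}}$. The key observation is that for each $i$ the Magnus embedding $E(G_i) \hookrightarrow E$ is compatible with the ordering: the leading monomial of $\phi(l^{(i)}_j)-1$ computed inside $E(G_i)$ coincides with its leading monomial in $E$, because the extra variables from other blocks do not appear in $\phi(l^{(i)}_j)-1$. Hence $\widehat{l^{(i)}_j} = X^{(i)}_{i_2} X^{(i)}_{i_1}$ with $i_2>i_1$ is a quadratic leading term in the global algebra.

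Finally I would check the non-overlap condition $\widehat{l}\text{'s second letter} \neq \widehat{l'}\text{'s first letter}$ for each pair of relators. Within a single block $i$ this is precisely the mildness of $(P_{G_i})$. Across two different blocks $i\neq j$, the leading monomial of $l^{(i)}_a$ only involves variables $X^{(i)}_\bullet$ while that of $l^{(j)}_b$ only involves $X^{(j)}_\bullet$, so the condition holds vacuously. Thus $(P_G)$ is a mild quadratic presentation of $G$. I expect the only point that needs care is the compatibility of the Magnus isomorphism with the coproduct decomposition, which is the step ensuring that the leading monomials survive unchanged after the embedding; everything else is bookkeeping.
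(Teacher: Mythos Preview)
Your proposal is correct and follows essentially the same route as the paper: both dispose of the all-free case, then concatenate the generators and relations of the individual mild presentations via the universal property, impose a block order placing the variables of $G_k$ above those of $G_{k-1}$ and so on, and observe that the leading monomials $\widehat{l^{(i)}_j}$ remain quadratic and satisfy the non-overlap condition (within a block by the mildness of $(P_{G_i})$, across blocks because distinct blocks use disjoint variables). You are somewhat more explicit than the paper about the compatibility of the Magnus embedding and about handling the mixed free/mild case via the convention $r_i=0$, but the argument is the same.
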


\begin{proof} If every~$G_i\simeq \Z_p$, then~$G$ is pro-$p$ free. Now to simplify the notations, we assume that for every~$i$, the pro-$p$ group~$G_i$ is not free (the mixed case is very similar).

    Thus for every~$i$ the group~$G_i$ admits a mild presentation:
    $$(P_{G_i}) \coloneq \langle x_{i,1}, \dots, x_{i,d_i} \mid l_{i,1}, \dots , l_{i,r_i}\rangle,$$
    and we have~$\widehat{l_{i,j}} \coloneq X_{i,j_2}X_{i,j_1}$, where~$X_{i,j_2}> X_{i,j_1}$ and~$j_2>j_1$.

    By Proposition~\ref{cohomology coprod}, the group~$G$ admits a presentation:
    $$(P_G) \coloneq \langle x_{1,1}, \dots, x_{1,d_1}, \dots , x_{k,d_k} \mid l_{1,1}, \dots, l_{1,r_1}, \dots, l_{k,r_k}\rangle.$$
    Considering the order:~$X_{k,d_k}\succ_X X_{k,d_k-1}\succ_X\dots\succ_X X_{k,1}\succ_X\dots \succ_X X_{1,1}$, we conclude that the presentation~$(P_G)$ is mild.
\end{proof}

\subsubsection{Open subgroups and the class $\overline{\D_m}$}
We define~$\overline{\D_m}$ as the closure of~$\D_m$ with respect to finite coproducts. Concretely, the pro-$p$ groups in~$\overline{\D_m}$ are described by pro-$p$ groups~$G \coloneq \coprod_{i=1}^k G_i$, where~$\{G_1, \dots, G_k\}$ is a family of pro-$p$ groups in~$\D_m$. A profinite version of the Kurosh subgroup Theorem \cite[Theorem~$(4.2.1)$]{NSW} allows us to show the following result:

\begin{coro}\label{virt} Suppose that the pro-$p$ groups~$G_1,\cdots, G_k$  are in~$\D_m$. Set~$G=\coprod_{i=1}^k G_i$.
    Then every open subgroup~$H$ of~$G$ is a coproduct of groups in~$\D_m$.
\end{coro}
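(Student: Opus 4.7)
The plan is to invoke the profinite Kurosh subgroup theorem for free pro-$p$ products, as referenced in~\cite[Theorem~(4.2.1)]{NSW}. For any open subgroup $H$ of $G$, this theorem yields a decomposition of the form
$$H \;\cong\; \biggl(\coprod_{i=1}^k \coprod_{j \in J_i} H \cap g_{i,j}\, G_i\, g_{i,j}^{-1}\biggr) \coprod F,$$
where, for each $i$, the family $\{g_{i,j}\}_{j \in J_i}$ is a system of representatives for the double cosets $H \backslash G / G_i$, and $F$ is a free pro-$p$ group. Since $H$ is open in the finitely generated group $G$, it is itself finitely generated, so the decomposition has finitely many factors and $F$ has finite rank.

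Next, I would verify that each factor in the above decomposition already lies in $\D_m$. For each $i$ and $j$, the intersection $H \cap g_{i,j} G_i g_{i,j}^{-1}$ is an open subgroup of the conjugate $g_{i,j} G_i g_{i,j}^{-1} \cong G_i$. As already used in the proof of Proposition~\ref{DmPPm} via~\cite[Proposition~7.5.9, Chapter~VII]{NSW}, open subgroups of groups in $\D_m$ remain in $\D_m$: an open subgroup of $\Z_p$ is again isomorphic to $\Z_p$, and an open subgroup of a Demushkin group of rank $2$ arising from a local field $K \ni \zeta_{p^m}$ (with residue characteristic $\neq p$) corresponds to the pro-$p$ absolute Galois group of a finite extension $L/K$, whose residue characteristic is still $\neq p$ and which still contains $\zeta_{p^m}$; hence it stays in $\D_m$.

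For the free pro-$p$ part $F$ of finite rank $r$, one uses the standard decomposition $F \cong \coprod_{i=1}^r \Z_p$, and each copy of $\Z_p$ lies in $\D_m$ (for instance, as the pro-$p$ absolute Galois group of a suitable $\ell$-adic local field with $\ell \neq p$ that does not contain $\zeta_p$). Assembling the two decompositions exhibits $H$ as a finite coproduct of groups in $\D_m$, hence $H \in \overline{\D_m}$. The main delicate point is thus to invoke the correct pro-$p$ form of Kurosh's theorem; once that is in hand, verifying that each factor belongs to $\D_m$ reduces to the closure of $\D_m$ under passage to open subgroups, which is precisely what was established in condition~$(iii)$ of Proposition~\ref{DmPPm}.
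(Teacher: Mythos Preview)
Your argument is correct and follows essentially the same route as the paper: both invoke the profinite Kurosh theorem \cite[Theorem~(4.2.1)]{NSW} to decompose $H$ into open subgroups of the $G_i$ and a free pro-$p$ factor, and then use the closure of $\D_m$ under open subgroups (Proposition~\ref{DmPPm}) together with the fact that a free pro-$p$ group of finite rank is a coproduct of copies of $\Z_p$. Your write-up is simply more explicit than the paper's terse version, spelling out the double-coset form of the decomposition and the treatment of the free factor.
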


\begin{proof}
    By \cite[Theorem~$(4.2.1)$]{NSW}, the group~$H$ is a coproduct of groups that are either isomorphic to an open subgroup of some $G_i$, or to a free pro-$p$ group. We conclude by applying Proposition \ref{DmPPm}.    
\end{proof}

\subsubsection{The property~$(\PP_m)$ and the class $\overline{\D_m}$}
We show here that the class $\overline{\D_m}$ satisfies the property $(\PP_m)$

\begin{theo}\label{stab coprod}
Assume that~$G$ is a pro-$p$ group in~$\overline{\D_m}$, then~$G$ satisfies~$(\PP_m)$.
\end{theo}

\begin{proof}
If $G$ is free, the proof is easy. Let us consider a family~$G_1,\dots, G_k$ such that~$G \coloneq \coprod_{i=1}^kG_i$, and assume that every~$G_i$ is mild (the proof of the mixed case is very similar). 
We observe from Corollary~\ref{mild coprod} that~$G$ is mild, so checks~$(ii)$.
Furthermore, from Corollary~\ref{virt}, every open subgroup of~$G$ is in~$\overline{\D_m}$. To conclude, we only need to show that~$G$ satisfies the~$m$-strong Massey vanishing property. This is a well-known result, but let us give a proof. 

From Proposition~\ref{cohomology of coprod}, we observe that
$$H^1(G)\simeq \bigoplus_{i=1}^k H^1(G_i), \ \text{and } \widetilde{\chi_{i,a}} \cup \widetilde{\chi_{j,b}}=0 \text{ for } i\neq j$$
for every~$\chi_{i,a} \in H^1(G_i) $ and $\chi_{j,b} \in H^1(G_j)$.

Let us consider~$(\chi_1, \dots, \chi_n)$ an $n$-tuple of elements in ~$H^1(G)$ such that~$\chi_u\cup \chi_{u+1}=0$. We construct a morphism~$\rho\colon G \to \mathbb{U}_{n+1}(\Z/p^m)$ such that~$\rho_{u,u+1} \equiv \chi_u \pmod{p}$ for each $1 \leq u \leq n$. For this purpose, we write
$$\chi_u \coloneq \sum_{j=1}^k \widetilde{\chi_{j,u}}, \ \text{ where } \chi_{j,u} \coloneq \chi_u\circ \iota_j \text{ is in } H^1(G_j).$$
As~${\rm Res}_2$ is an isomorphism that satisfies~${\rm Res}_2(a\cup b)={\rm Res}_1(a)\cup {\rm Res}_1(b)$, we infer that:
$$\chi_u\cup \chi_{u+1} \coloneq \sum_{j=1}^k \widetilde{\chi_{j,u}}\cup \widetilde{\chi_{j,u+1}}=0 \implies \widetilde{\chi_{j,u}}\cup \widetilde{\chi_{j,u+1}}=0, \text{ for every } j.$$

Since~$G_j$ satisfies the~$m$-strong Massey vanishing property, we can construct~$\rho_j\colon G_j \to \mathbb{U}_{n+1}(\Z/p^m)$ such that~$(\rho_j)_{u,u+1} \equiv \chi_{j,u} \pmod{p}$. 

By the universal property, we infer a map~$\rho\colon G \to \mathbb{U}_{n+1}(\Z/p^m)$, which satisfies~$\rho_{u,u+1} \equiv \sum_j \widetilde{{\chi_{j,u}}} \coloneq \chi_u \pmod{p}$.
\end{proof}

Let us give consequences on unipotent quotients of~$G$.

\begin{prop}\label{unipotent quotients}
    Assume that~$G$ is in~$\overline{\D_m}\setminus \D_m$, i.e.,\ ~$G$ is a coproduct of at least two factors. Then the group~$\mathbb{U}_{n+1}(\Z/p^m)$ is a quotient of~$G$ if and only if~$n\leq h^1(G)$.
\end{prop}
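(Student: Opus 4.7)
The forward direction is immediate: a surjection $G \twoheadrightarrow \mathbb{U}_{n+1}(\Z/p^m)$ induces a surjection on Frattini quotients, and Lemma~\ref{frat uni} gives $d_p \mathbb{U}_{n+1}(\Z/p^m) = n$, so $h^1(G) = d_p G \geq n$.

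For the converse, suppose $n \leq h^1(G)$; the aim is to produce a surjection $G \twoheadrightarrow \mathbb{U}_{n+1}(\Z/p^m)$. Since $G$ lies in $\overline{\D_m}$, Theorem~\ref{stab coprod} tells me that $G$ satisfies the $m$-strong Massey vanishing property. By Proposition~\ref{surj and strong}, it is therefore sufficient to exhibit an $n$-tuple $\chi = (\chi_1, \dots, \chi_n)$ in $H^1(G)^n$ such that $\theta_\chi \colon G \to \F_p^n$ is surjective and $\chi_u \cup \chi_{u+1} = 0$ for every $1 \leq u \leq n-1$.

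To construct such a $\chi$, write $G = \coprod_{i=1}^{k} G_i$ with $k \geq 2$ and each $G_i \in \D_m$, and set $d_i \coloneq d_p G_i \in \{1, 2\}$. The crucial input from Proposition~\ref{cohomology of coprod} is that $\widetilde{\alpha} \cup \widetilde{\beta} = 0$ whenever $\alpha \in H^1(G_i)$ and $\beta \in H^1(G_j)$ with $i \neq j$; so I will pick each $\chi_u$ from the image of $H^1(G_{a(u)}) \hookrightarrow H^1(G)$ for some index sequence $a(1), \dots, a(n)$ with $a(u) \neq a(u+1)$. First, choose a partition $n = n_1 + \cdots + n_k$ with $0 \leq n_i \leq d_i$, which exists because $n \leq \sum_i d_i = h^1(G)$; note that $\max_i n_i \leq 2 \leq \lceil n/2 \rceil$ for $n \geq 3$. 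A standard combinatorial arrangement lemma then produces a sequence $a(1), \dots, a(n) \in \{1, \dots, k\}$ with no two consecutive entries equal and exactly $n_i$ entries equal to $i$. Placing a linearly independent family $\chi^{(i)}_1, \dots, \chi^{(i)}_{n_i}$ from $H^1(G_i)$ along this arrangement defines $\chi$; linear independence of $\chi_1, \dots, \chi_n$ in $H^1(G) = \bigoplus_i H^1(G_i)$ gives surjectivity of $\theta_\chi$, and adjacent cup products vanish by construction.

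The only nontrivial ingredient is the combinatorial arrangement, and it succeeds precisely because each factor in $\D_m$ has generator rank at most $2$ while $k \geq 2$: under these constraints no single factor can supply more than half of the required characters, which is exactly what would obstruct the alternation. The small cases $n \leq 2$ reduce to a direct statement about the abelianization of $G$ and are immediate.
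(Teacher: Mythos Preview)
Your proof is correct and follows essentially the same approach as the paper: the forward direction via Lemma~\ref{frat uni}, and the converse via Theorem~\ref{stab coprod}, Proposition~\ref{surj and strong}, and the vanishing of cross-factor cup products from Proposition~\ref{cohomology of coprod}. The only cosmetic difference is in the construction of the tuple~$\chi$: the paper (after reducing to the case where every factor is Demushkin of rank~$2$) simply lists the characters in the explicit cyclic order~$\chi_{1,1},\chi_{2,1},\dots,\chi_{k,1},\chi_{1,2},\dots$, whereas you invoke a general partition-and-arrangement argument; both exploit the same fact that~$d_pG_i\le 2$ forces no factor to contribute more than half the characters.
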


\begin{proof}
    If~$\bU_{n+1}(\Z/p^m)$ is a quotient of~$G$, then we infer a surjection~$G/{\rm Frat}(G)\simeq \F_p^{h^1(G)}$ to~$\bU_{n+1}(\Z/p^m)/{\rm Frat}(\bU_{n+1}(\Z/p^m))\simeq \F_p^n$. Thus, we obtain~$h^1(G) \geq n$.

    Conversely, we assume that~$h^1(G)\geq n$, and let us write~$G \coloneq G_1\coprod \dots \coprod G_k$ where~$k\geq 2$ and each~$G_i$ lies in~$\D_m$. By Theorem~\ref{stab coprod}, the pro-$p$ group~$G$ satisfies the~$m$-strong Massey vanishing property. Using Proposition~\ref{cohomology of coprod}, we can construct an $n$-tuple~$(\chi_1,\dots, \chi_n)$ of characters of~$G$ such that~$\chi_u\cup \chi_{u+1}=0$ for every $1 \leq u \leq n-1$.

    To simplify the discussion, let us assume that none of the groups~$G_i$ is isomorphic to~$\Z_p$. For each $1 \leq i \leq k$ we choose~$\{\chi_{i,1}, \chi_{i,2}\}$ a basis of~$H^1(G_i)$, and we define~$\chi_i \coloneq  \chi_{i,1}$ if~$i\leq k$ and~$\chi_i \coloneq \chi_{i-k,2}$ if~$i>k$.  This family is well defined since~$h^1(G) \coloneq 2k \geq n$, and the associated map~$\theta_\chi \colon G \to \F_p^n$ is surjective. We conclude using Proposition~\ref{surj and strong}.
\end{proof}

\begin{rema}\label{unipotent quotients 2}
Assume that~$G \in \overline{\D_m}\setminus \D_m$, and is given in the form:
$$G \coloneq \bigg ( \coprod_{i=1}^{k_1}G_i \bigg ) \coprod \bigg ( \coprod_{j=1}^{k_2}H_j \bigg ),$$
where~each $G_i$ is a Demushkin group of rank~$2$ in~$\D_m$, and each~$H_j$ is isomorphic to 
$\Z_p$. In particular, we have~$h^1(G)=2k_1+k_2$.
The pro-$p$ group~$G$ admits a free quotient of rank~$k_1+k_2$. Using Lemma~\ref{frat uni}, we infer that for every integer~$l$, the pro-$p$ group~$G$ admits~$\bU_{n+1}(\Z/p^l)$ as a quotient, whenever~$n\leq k_1+k_2$. However, thanks to the property~$(\PP_m)$, we can go beyond this bound: more precisely, we can construct a surjection from $G$ onto~$\bU_{n+1}(\Z/p^m)$ for any~$n\leq 2k_1+k_2$.
\end{rema}


\section{Arithmetic applications}

\subsection{Notations}\label{arithmetic notations}
Let~$K$ be a number field.
Denote by

\begin{itemize}
\item[$\bullet$]~$(r_1,r_2) \coloneq (r_{1,K},r_{2,K})$ the signature of~$K$,
\item[$\bullet$]~$S_p$ the set of~$p$-adic places of~$K$,
\item[$\bullet$]~$S$ a finite set of places of~$K$; set~$S_p' \coloneq S\cap S_p$,
\item[$\bullet$]~$K_v$ the completion of~$K$ at each place~$v$ of~$K$, and~$U_v$ the group of units of~$K_v$,
\item[$\bullet$]~$\G_v$ the Galois group of the maximal pro-$p$ extension of~$K_v$;~$\I_v$ its inertia subgroup, and~$\Ff_v = \G_v/\I_v$,
\item[$\bullet$]~$\delta_S \coloneq \underset{v\in S_p'}{\sum}[K_v:\Q_p]$, so that~$\delta_S=\delta_{S_p'}$,
\item[$\bullet$]~A finite prime~$\q$ of~$K$ is called tame if~$N(\q) \equiv 1 \pmod{p}$, and more generally,~$m$-tame if~$N(\q) \equiv 1 \pmod{p^m}$ for some integer~$m \geq 1$,
\item[$\bullet$]~For a set~$N=\{\q_1,\cdots,\q_n\}$ of tame primes, we write~$m_N \coloneq min \{ v_p(N(\q)-1), \q \in N\}$, where~$v_p$ is the discrete~$p$-adic valuation on~$\Z$,
\item[$\bullet$]~For each place~$v$, let~$\displaystyle{\Uu_v \coloneq  \lim_{\stackrel{\longleftarrow}{n}} U_v/U_v^{p^n}}$ be the pro-$p$ completion of~$U_v$. Then define~$\displaystyle{\Uu_{S} \coloneq \prod_{v \in S}\Uu_v}$,
\item[$\bullet$]~$T$ a finite set of places of~$K$,  disjoint  from~$S$;
    set~$E^T \coloneq E_K^T$ the \textit{pro-$p$ completion} of the  group of~$T$-units of~$K$,
\item[$\bullet$]~$\varphi_S^T : E^T  \to \Uu_S$   the diagonal embedding of~$E^T$ into~$\Uu_S$,
\item[$\bullet$]~$K_S^T/K$  the maximal pro-$p$ extension of~$K$ unramified outside~$S$ and totally decomposed at~$T$;~$G_S^T \coloneq G_{K,S}^T \coloneq Gal(K_S^T/K)$, and~$G_S \coloneq G_S^\emptyset$,

\item[$\bullet$]~$K_S^{T,ab}$ is the maximal abelian extension of~$K$ in~$K_S^T$; set~$G_S^{T,ab} \coloneq Gal(K_S^{T,ab}/K)$,

\item[$\bullet$]~$K^{T,p,el}_S$ is the maximal elementary abelian extension of~$K$ in~$K^T_S$; set~$(G^T_S)^{p,el} \coloneq Gal(K^{T,p,el}_S/K)$,
\item[$\bullet$]~$\T_S^T$ the~$\Z_p$-torsion part of~$G_S^{T,ab}$,
\item[$\bullet$]~$r_S^T \coloneq  \rk_{\Z_p} G_S^{T,ab}$,~$r_S \coloneq r_S^\emptyset$,
\end{itemize}

In our work, infinite places play a limited role in arguments. We focus on finite places and distinguish them by notation according to their roles:~$\p$ denotes a~$p$-adic place,~$\q$ a non-$p$-adic (tame) place where ramification may occur (see Remark~\ref{normcondition}), and~$\l$ a place at which splitting conditions are imposed.

\subsection{Classical results}

Most of the results in this section are well-known; see for example~\cite[\S 1]{lim2024analyticity}, \cite[Chapter X]{NSW}. Since Shafarevich and Koch, we know that~$G_S^T$ is finitely presented as in the following theorem.
\begin{theo} Suppose that~$S$ is not empty. Then, we have
$$1-h^1(G_S^T)+h^2(G_S^T) \leq r_1+r_2+|T| - \delta_S.$$
\end{theo}

When~$S'_p=\emptyset$, the pro-$p$ group~$G_S^T$ is FAB. More generally, we have

\begin{prop}\label{Z_p-rank}
One has~$r_S^T= \rk_{\Z_p} \left( {\rm coker} (\varphi_S^T) \right)$. Thus if~$\varphi_S^T$ is injective then $$r_S^T =\delta_{S} -(r_1+r_2-1+|T|).$$
Conversely, if~$T$ is disjoint from~$S$ and we have the previous equality, then~$\varphi_S^T$ is injective.
\end{prop}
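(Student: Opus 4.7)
The plan is to derive both statements from a single class field theoretic exact sequence for $G_S^{T,\mathrm{ab}}$. Global reciprocity, combined with the local norm computations for extensions unramified outside $S$ and totally split at $T$, yields a four-term exact sequence
$$E^T \xrightarrow{\;\varphi_S^T\;} \mathcal{U}_S \longrightarrow G_S^{T,\mathrm{ab}} \longrightarrow A_S^T \longrightarrow 0,$$
where $A_S^T$ is a finite $p$-group (the pro-$p$ part of an appropriate $(S,T)$-class group of $K$); its finiteness reduces to that of the $p$-part of the class group of $K$. Taking $\Z_p$-ranks kills the finite term $A_S^T$ and yields
$$r_S^T \;=\; \rk_{\Z_p}\,\mathcal{U}_S \;-\; \rk_{\Z_p}\bigl(E^T / \ker \varphi_S^T\bigr) \;=\; \rk_{\Z_p}\,{\rm coker}(\varphi_S^T),$$
which is the first assertion.

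For the explicit formula I would plug in two classical rank calculations. A finite place $v$ contributes $\rk_{\Z_p}\,\mathcal{U}_v = [K_v:\Q_p]$ when $v \mid p$ and $0$ otherwise, since the pro-$p$ completion of $U_v$ is finite for $v \nmid p$; hence $\rk_{\Z_p}\,\mathcal{U}_S = \delta_S$. Dirichlet's $T$-unit theorem gives $\rk_{\Z_p}\,E^T = r_1 + r_2 - 1 + |T|$, with torsion exactly $\mu_p(K)$. Substituting into the first formula,
$$r_S^T \;=\; \delta_S - (r_1 + r_2 - 1 + |T|) + \rk_{\Z_p}\,\ker \varphi_S^T.$$

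Both implications now fall out immediately. If $\varphi_S^T$ is injective the last term vanishes, giving the displayed equality. For the converse, assume $T \cap S = \emptyset$ (so that $\varphi_S^T$ is defined) and that the equality holds. Then $\rk_{\Z_p}\,\ker \varphi_S^T = 0$, so the kernel is torsion and hence contained in $\mu_p(K)$. But every nontrivial root of unity in $K^\times$ embeds nontrivially into each completion $K_v$ via $K \hookrightarrow K_v$, and its image survives in the pro-$p$ completion $\mathcal{U}_v$ because $p$-power-order elements lie in the pro-$p$ part of $U_v$; since $S$ is nonempty in the relevant setting, any such root of unity has nontrivial image under $\varphi_S^T$. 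We conclude $\ker \varphi_S^T = 0$, i.e.\ $\varphi_S^T$ is injective. The only real technical point is securing the exact sequence in the first step and the precise identification of $A_S^T$ as a finite class group; this is a standard input (in the style of Chapter~X of Neukirch--Schmidt--Wingberg, suitably adapted to incorporate the splitting condition at $T$), after which the rest is $\Z_p$-rank bookkeeping.
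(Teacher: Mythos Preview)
Your argument is correct and is precisely the standard one. The paper does not supply its own proof of this proposition; it is listed among the ``well-known'' results with a blanket reference to \cite[\S 1]{lim2024analyticity} and \cite[Chapter X]{NSW}. The four-term class-field-theoretic sequence you invoke is exactly the sequence the authors themselves write down later, in the proof of Proposition~\ref{abelianization} (with $A_S^T$ identified as $\Cl_T$), so your approach is fully aligned with the paper's framework.

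Two minor remarks on the converse. First, your phrase ``since $S$ is nonempty in the relevant setting'' is doing real work: if $S$ contains no finite place then $\Uu_S$ can be trivial (for odd $p$) and the torsion argument fails, so you are right to flag it. Second, for the injectivity of $\mu_p(K)\to \Uu_v$ it is not quite enough to say each nontrivial $p$-power root of unity maps nontrivially; one needs that a generator of $\mu_p(K)$ keeps its full order in $\Uu_v$. This is true because the kernel of $U_v\to\Uu_v$ is the prime-to-$p$ part of the torsion of $U_v$ (for finite $v$), but it would be worth saying so explicitly rather than leaving it implicit.
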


As a consequence, we have (see. \cite[Lemma 1.3]{lim2024analyticity}):

\begin{coro}  \label{coro_bounded_H2} Suppose that~$S \neq \emptyset$. Then
$\rk_{\Z_p} H_{2}(G_{S}^{T}, \Z_p) \leq  \rk_{\Z_p} \left(\ker (\varphi_{S}^{T})\right)$,
where~$H_2$ denotes the second group homology.
\end{coro}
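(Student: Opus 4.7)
The plan is to translate the Shafarevich--Koch bound on $h^2(G_S^T)$ stated in the preceding theorem into a bound on $\rk_{\Z_p} H_2(G_S^T,\Z_p)$ via the universal coefficient theorem, and then to eliminate the geometric constants using Proposition~\ref{Z_p-rank}.

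First I would apply the universal coefficient short exact sequence for continuous pro-$p$ (co)homology,
\[
0 \to {\rm Ext}^1_{\Z_p}\bigl(H_{i-1}(G_S^T,\Z_p),\F_p\bigr) \to H^i(G_S^T,\F_p) \to {\rm Hom}_{\Z_p}\bigl(H_i(G_S^T,\Z_p),\F_p\bigr) \to 0,
\]
for $i=1,2$. Using $H_0(G_S^T,\Z_p)=\Z_p$ together with the decomposition $H_1(G_S^T,\Z_p) = G_S^{T,ab} \simeq \Z_p^{r_S^T} \oplus \T_S^T$, and passing to $\F_p$-dimensions, this gives
\[
h^1(G_S^T) = r_S^T + d_p \T_S^T \quad \text{and} \quad h^2(G_S^T) = d_p H_2(G_S^T,\Z_p) + d_p \T_S^T,
\]
so that $h^2(G_S^T)-h^1(G_S^T) = d_p H_2(G_S^T,\Z_p) - r_S^T$.

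Combining this identity with the Shafarevich--Koch inequality yields $d_p H_2(G_S^T,\Z_p) \leq r_S^T + r_1+r_2+|T|-\delta_S-1$. To cancel the geometric constants on the right, I would invoke Proposition~\ref{Z_p-rank} together with the $\Z_p$-rank count extracted from the four-term exact sequence
\[
0 \to \ker\varphi_S^T \to E^T \to \Uu_S \to {\rm coker}\,\varphi_S^T \to 0,
\]
using that Dirichlet's unit theorem gives $\rk_{\Z_p} E^T = r_1+r_2-1+|T|$ while only the $p$-adic local units contribute to $\rk_{\Z_p}\Uu_S = \delta_S$, whence $r_S^T = \delta_S-(r_1+r_2-1+|T|)+\rk_{\Z_p}\ker(\varphi_S^T)$. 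Substitution collapses the constants to leave $d_p H_2(G_S^T,\Z_p) \leq \rk_{\Z_p}\ker(\varphi_S^T)$, and the corollary follows from the elementary inequality $\rk_{\Z_p} A \leq d_p A$ valid for every finitely generated $\Z_p$-module $A$.

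I do not anticipate a genuine obstacle, since every arithmetic ingredient (the Shafarevich--Koch inequality, Proposition~\ref{Z_p-rank}, and Dirichlet's unit theorem) is already granted. The only delicate point is the bookkeeping of the torsion contribution $d_p \T_S^T$ arising from the ${\rm Ext}^1$ term in universal coefficients, but it appears symmetrically in the formulas for $h^1$ and $h^2$ and cancels cleanly upon subtraction.
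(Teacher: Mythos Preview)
Your argument is correct: the universal coefficient computation giving $h^2-h^1 = d_p H_2(G_S^T,\Z_p) - r_S^T$, combined with the Shafarevich--Koch inequality and the rank identity $r_S^T = \delta_S - (r_1+r_2-1+|T|) + \rk_{\Z_p}\ker(\varphi_S^T)$ extracted from Proposition~\ref{Z_p-rank}, collapses exactly as you say. The paper itself does not spell out a proof but cites \cite[Lemma~1.3]{lim2024analyticity} and presents the corollary as a consequence of Proposition~\ref{Z_p-rank}; your derivation is precisely the natural way to make that ``as a consequence'' explicit, so there is no divergence in approach.
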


By duality, we have an isomorphism~$H^2(G, \Q/\Z) \cong H_2(G, \Z_p)$ for a pro-$p$ group~$G$. Corollary~\ref{coro_bounded_H2} allows us to find many instances where the following lemma is particularly useful.

 \begin{lemm}\label{lemm_free} Let~$\psi : \Gamma \twoheadrightarrow G$ be a surjective morphism of pro-$p$ groups. Suppose moreover that~$H^2(G,\Q/\Z)=0$. Then~$\psi$ is an isomorphism if and only if~$\psi$ induces an isomorphism between~$\Gamma^{ab}$ and~$G^{ab}$.
 \end{lemm}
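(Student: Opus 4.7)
My plan is to isolate $R \coloneq \ker\psi$ and prove $R = 1$ via a Hochschild--Serre argument. The direction ``$\psi$ iso $\Rightarrow$ $\psi^{\rm ab}$ iso'' is immediate, so I focus on the converse.

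First I would write the five-term exact sequence in pro-$p$ homology associated to $1 \to R \to \Gamma \to G \to 1$:
$$H_2(\Gamma, \Z_p) \to H_2(G, \Z_p) \to R/[R, \Gamma] \to \Gamma^{\rm ab} \to G^{\rm ab} \to 0.$$
The assumption that $\psi^{\rm ab}$ is an isomorphism forces $R \subseteq [\Gamma, \Gamma]$ (since $\ker \psi^{\rm ab} = R[\Gamma,\Gamma]/[\Gamma,\Gamma]$), so the map $R/[R, \Gamma] \to \Gamma^{\rm ab}$ is the zero map. Via the Pontryagin-type duality $H^2(G, \Q/\Z) \cong H_2(G, \Z_p)^\vee$ recalled immediately above the statement, the hypothesis $H^2(G, \Q/\Z) = 0$ becomes $H_2(G, \Z_p) = 0$. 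Exactness then forces $R/[R, \Gamma] = 0$, i.e., $R = [R, \Gamma]$.

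It remains to deduce $R = 1$ from the relation $R = [R, \Gamma]$, which is a purely group-theoretic statement. Since $R \subseteq \Gamma$, we have $R = [R, \Gamma] \subseteq [\Gamma, \Gamma] = \gamma_2(\Gamma)$, and by induction $R = [R, \Gamma] \subseteq [\gamma_n(\Gamma), \Gamma] = \gamma_{n+1}(\Gamma)$ for every $n \geq 1$, so $R \subseteq \bigcap_n \gamma_n(\Gamma)$. Writing $\Gamma = \varprojlim_U \Gamma/U$ over open normal subgroups $U$, each finite $p$-group $\Gamma/U$ is nilpotent, so $\gamma_n(\Gamma) \subseteq U$ for $n$ large enough; hence $\bigcap_n \gamma_n(\Gamma) \subseteq \bigcap_U U = 1$. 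Therefore $R = 1$ and $\psi$ is an isomorphism.

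The step I expect to require the most care is the identification $H^2(G, \Q/\Z) \cong H_2(G, \Z_p)^\vee$ for an arbitrary pro-$p$ group $G$ (without any finite-generation hypothesis), but since the authors explicitly invoke this duality in the paragraph preceding the lemma, it can be cited directly. Modulo that, the proof reduces to a formal application of the five-term exact sequence together with the residual nilpotence of pro-$p$ groups.
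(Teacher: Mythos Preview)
Your argument is correct and is precisely the standard proof of this result; the paper itself simply cites Movahhedi \cite[Lemme~2]{MR1124802}, whose proof is the cohomological dual of the Stallings-type argument you give. The only cosmetic point is that in the pro-$p$ setting the commutator subgroups $[R,\Gamma]$ and $\gamma_n(\Gamma)$ should be read as their topological closures, which you implicitly use when passing to finite quotients $\Gamma/U$.
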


 \begin{proof}
 See~\cite[Lemma~$2$]{MR1124802}.
 \end{proof}

In particular, we have the following practical criterion for determining when~$G^T_S$ is free.

\begin{prop}\label{prop_free}
A pro-$p$ group~$G$ is free if and only if~$G^{ab}$ is torsion-free and~$H^2(G,\Q/\Z)=0$. In particular, if~$\ker(\varphi_{S}^T)=1$ and~$\T_{S}^T=1$, then~$G^T_S$ is free pro-$p$. Furthermore, we have~$h^1(G_S^T) = 1+\delta_S-(r_1+r_2+|T|)$.
\end{prop}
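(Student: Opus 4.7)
The plan is to address the three assertions in order. For the equivalence, the forward implication is standard: a free pro-$p$ group has abelianization isomorphic to some $\Z_p^d$ (thus torsion-free) and cohomological dimension $1$, so $H^2(G,\Q/\Z)=0$. For the converse, I would take $d=h^1(G)$, pick a minimal presentation by a free pro-$p$ group $F$ of rank $d$, and consider the induced surjection $\psi : F \twoheadrightarrow G$. By Lemma~\ref{lemm_free}, the vanishing $H^2(G,\Q/\Z)=0$ reduces the claim to showing that $\psi^{ab}$ is an isomorphism. Since $G$ is finitely generated and $G^{ab}$ is torsion-free, one gets $G^{ab}\simeq\Z_p^{e}$ with $e=d_p(G^{ab})=d_p(G)=d$, so $\psi^{ab}:\Z_p^d\twoheadrightarrow\Z_p^d$ is a surjection between free $\Z_p$-modules of equal rank, hence an isomorphism.

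Next, for the special case of $G_S^T$: the assumption $\T_S^T=1$ gives immediately that $G_S^{T,ab}$ is torsion-free. To apply the equivalence, one needs $H^2(G_S^T,\Q/\Z)=0$. By Pontryagin duality between compact and discrete pro-$p$ modules, this is equivalent to the vanishing of $H_2(G_S^T,\Z_p)$. The key input is Corollary~\ref{coro_bounded_H2}, which bounds $\rk_{\Z_p} H_2(G_S^T,\Z_p)$ by $\rk_{\Z_p}(\ker(\varphi_S^T))$; together with the classical Poitou--Tate description of the relation module of $G_S^T$ as $\ker(\varphi_S^T)$ (up to dualization), the stronger hypothesis $\ker(\varphi_S^T)=1$ in fact forces $H_2(G_S^T,\Z_p)=0$, not just rank zero. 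Then the first part applies and gives that $G_S^T$ is free pro-$p$.

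Finally, for the formula for $h^1(G_S^T)$: once we know $G_S^T$ is free pro-$p$, its abelianization is the free $\Z_p$-module $\Z_p^{h^1(G_S^T)}$, so $h^1(G_S^T)=\rk_{\Z_p} G_S^{T,ab}=r_S^T$. Since $\ker(\varphi_S^T)=1$, Proposition~\ref{Z_p-rank} yields $r_S^T=\delta_S-(r_1+r_2-1+|T|)=1+\delta_S-(r_1+r_2+|T|)$, as required.

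The main conceptual obstacle is the second step, namely upgrading Corollary~\ref{coro_bounded_H2} (a rank inequality) to genuine vanishing of $H_2(G_S^T,\Z_p)$ under the hypothesis $\ker(\varphi_S^T)=1$. This relies on the standard duality identifying the dualizing relation module with $\ker(\varphi_S^T)$, which is a classical fact in the arithmetic theory of $G_S^T$ (following Koch and the treatment in \cite{NSW}); the rest is a matter of stringing together the equivalence, Lemma~\ref{lemm_free}, and Proposition~\ref{Z_p-rank}.
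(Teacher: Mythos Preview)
Your proposal is correct and follows essentially the same route as the paper: minimal presentation plus Lemma~\ref{lemm_free} for the equivalence, Corollary~\ref{coro_bounded_H2} for the vanishing of $H^2(G_S^T,\Q/\Z)$, and Proposition~\ref{Z_p-rank} for the rank formula. You are in fact more careful than the paper on one point: the paper simply asserts that Corollary~\ref{coro_bounded_H2} gives $H^2(G_S^T,\Q/\Z)=0$, whereas you correctly flag that the corollary as stated only yields $\rk_{\Z_p} H_2(G_S^T,\Z_p)=0$ and that passing to genuine vanishing requires the stronger input (from the underlying result in \cite{lim2024analyticity} or the classical identification of the relation module) that $\ker(\varphi_S^T)=1$ kills $H_2$ entirely.
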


\begin{proof}
Consider a minimal presentation
$$1\to R \to F \to G \to 1.$$
Since~$G^{ab}$ is torsion free, we have~$G^{ab}\simeq F^{ab}$. Therefore, by Lemma~\ref{lemm_free}, we conclude that~$F\simeq G$. Now, let~$K$,~$S$, and~$T$ be as in the statement of Proposition~\ref{prop_free}. In this setting, the vanishing of~$H^2(G^T_S, \Q/\Z)$ follows from Corollary~\ref{coro_bounded_H2}. Lastly, the formula for~$h^1(G^T_S)$ follows from the assumption~$\T^T_S=1$, together with the Burnside Basis lemma and Proposition~\ref{Z_p-rank}.
\end{proof}

\begin{rema}
A non-$p$-adic place is not ramified in a free pro-$p$ extension. Hence, if~$G^T_S$ is free, then we have~$K^T_S=K^T_{S'}$.
\end{rema}

\begin{rema}(see~\cite[\S 10]{Koch})\label{normcondition}
A finite non-$p$-adic prime~$\q$ ramifies in a pro-$p$ extension if and only if its norm~$N(\q)$ in~$\NN$ satisfies~$N(\q) \equiv 1 \pmod{p}$. It is well known that the following conditions are equivalent:
\begin{itemize}
\item~$\q$ is~$m$-tame;
\item~$\G_{\q}$ belongs to the class~$\D_m$;
\item~$\q$ splits completely in~$K(\zeta_{p^m})/K$.
\end{itemize}
\end{rema}

\begin{rema}
When~$S=S_p$ and~$T=\emptyset$, the conditions~$\ker(\varphi_{S_p})=1$ and~$\T_{S_p}=1$ together are equivalent to the freeness of~$G_{S_p}$, by Euler-Poincar\'{e} characteristic formula (see~\cite[Corollary 8.7.5]{NSW}).
\end{rema}

\subsection{The coproduct decomposition of~$G_S$} \label{section_gras}

In this subsection, we explain that, in many cases, the Galois group~$G_S$ belongs to the class~$\overline{\D_m}$ for some integer~$m \geq 1$.

One important guiding principle in the study of~$G_S$ has been to use presentations of~$\G_v$ at the places~$v \in S$, which is well-known, as explained in \S~\ref{property P}.

\medskip

For each place~$v$, we have a natural restriction map~$\G_v \to G_S$ corresponding to a fixed embedding~$k_S \hookrightarrow \overline{k_v}$. When~$v \not\in S$, this map factors through the quotient~$\G_v \to \G_v/\I_v \cong \Ff_v$. Using Proposition~\ref{cohomology coprod}, for each subset~$S_0$ of~$S$ we obtain a canonical morphism $$\psi_{S,S_0}: \coprod_{v \in S_0} \G_v \longrightarrow G_S.$$
By the Burnside Basis theorem, the map~$\psi_{S,S_0}$ is surjective if and only if~$G^{S_0}_{S\setminus S_0}$ is trivial.

If~$\psi_{S,S_0}$ is not surjective, we choose a finite set~$W$ of primes of~$K$, disjoint from~$S$, such that~$G^{S_0 \cup W} _{S \setminus S_0}$ is trivial. Then, we obtain a surjective map
$$ \psi_{S,S_0,W} : \bigg ( \coprod_{v \in S_0} \G_v \bigg ) \coprod \bigg ( \coprod_{w \in W} \Ff_w \bigg  ) \twoheadrightarrow G_S. $$

Inspired by the theory of Riemann surfaces, several studies have investigated the problem of finding conditions under which the map~$\psi_{S,S_0,W}$ is an isomorphism (see~\cite[Chapter X, \S 5 and \S 9]{NSW}). This situation serves as a starting point for our study of~$G^T_S$.

To illustrate the relevance of such cases, suppose that the map~$\psi_{S,S_0,W}$ is an isomorphism, that~$S_0$ contains at least one tame prime, and that each local Galois group~$\G_v$ with~$v \in S_0 \cap S_p$ is free. Then~$G_S$ belongs to the class~$\overline{\D_m}$, where~$m = m_{S'_0}$ for the maximal subset~$S'_0 \subseteq S_0$ consisting of tame primes. By Theorem~\ref{stab coprod}, it follows that~$G_S$ satisfies the property~$(\PP_m)$.

\medskip

The notion of~$p$-rationality naturally emerges in this context and will play a central role in what follows. Throughout, we make the additional assumption that~$K$ is totally imaginary when~$p=2$. Then, the cohomological dimension of~$G_{S_p}$ is always less than or equal~$2$. Following~\cite[Definition 1]{MR1124802}, we say that a number field~$K$ is~$p$-rational if~$G_{S_p}$ is free. While this condition may seem technical, the case~$\Q(\zeta_p)$ with~$p$ regular provides a classical example (see~\cite[\S 4]{Sha}). In recent years, there has been growing interest in $p$-rationality for multiquadratic fields. Additionally, we recall the following fundamental conjecture due to Gras.

\begin{conj}[Gras~\cite{Gras-CJM}, Conjecture~$8.11$]\label{conjGras} Given a number field~$K$, we have~$\T_{S_p}=1$ for~$p\gg 0$ and so, from Leopoldt Conjecture, the field~$K$ is~$p$-rational for~$p\gg 0$.
\end{conj}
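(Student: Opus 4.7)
The statement is Gras's Conjecture 8.11, which remains a longstanding open problem in general; what follows is the strategy I would pursue rather than a complete unconditional argument. The natural starting point is the class field theoretic exact sequence
$$1 \to \overline{\varphi_{S_p}(E_K)} \to \Uu_{S_p} \to G_{S_p}^{ab} \to \Cl_K(p) \to 1,$$
where $\Cl_K(p)$ is the $p$-Sylow of the ideal class group and $\varphi_{S_p}$ is the diagonal embedding of global into local units. Granting Leopoldt for $(K,p)$, the map $\varphi_{S_p}$ is injective and Proposition~\ref{Z_p-rank} gives $\rk_{\Z_p} G_{S_p}^{ab} = r_2 + 1$; thus $\T_{S_p}$ decomposes (up to this exact sequence) into three contributions: $\Cl_K(p)$, the $p$-torsion of $\Uu_{S_p}$ coming from local roots of unity at $p$-adic places, and the saturation defect of $\varphi_{S_p}(E_K)$ inside the free part of $\Uu_{S_p}$.

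For $p \gg 0$ the first two contributions are immediate to dispose of: the rational prime $p$ divides $|\Cl_K|$ only finitely often since $K$ is fixed, and $\Uu_\p$ carries $p$-torsion only when $K_\p \supseteq \zeta_p$, which excludes only finitely many $p$ because $[K:\Q]$ is bounded. The decisive third contribution is encoded in Gras's normalized $p$-adic regulator $R_p(K)$, a determinant of $p$-adic logarithms of a $\Z$-basis of the units, and one has an explicit identity of the shape
$$|\T_{S_p}| = |\Cl_K(p)| \cdot |R_p(K)|_p^{-1} \cdot c_p(K),$$
with $c_p(K)$ a simple local root-of-unity factor. Gras's Conjecture is therefore equivalent to the statement that the $p$-adic valuation of $R_p(K)$ vanishes for all but finitely many $p$.

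The main obstacle is precisely this last claim. Heuristically, $R_p(K)$ is a transcendental $p$-adic number whose $p$-adic valuation should vanish for almost all $p$, in the spirit of a $p$-adic analogue of Schanuel's conjecture, but making this rigorous requires transcendence-theoretic input that is not available for a general number field. A realistic approach is then to attack the conjecture in special cases where $R_p(K)$ admits an analytic expression: for $K$ abelian over $\Q$ the $p$-adic class number formula rewrites $R_p(K)$ in terms of values of $p$-adic $L$-functions, so one can combine Ferrero--Washington type non-vanishing with Leopoldt to conclude for many families (cyclotomic, quadratic, multiquadratic). For a general $K$ the conjecture still seems out of reach, and one must, as the authors do in Theorem~\ref{theoB}, take it as a black box and feed it into the coproduct decomposition machinery developed in Section~\ref{section_gras}.
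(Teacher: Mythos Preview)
The statement you are asked to prove is labeled in the paper as a \emph{Conjecture}, not a theorem: the paper offers no proof whatsoever, treating Gras's Conjecture as an open problem and invoking it only as a standing hypothesis (e.g.\ in Theorem~\ref{theoB} and Heuristic~\ref{heuristicGST}). You correctly recognize this at the outset and present a strategy rather than a claimed proof, which is the appropriate response.

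That said, the heuristic viewpoint you develop differs from the one the paper alludes to. You frame the problem via the normalized $p$-adic regulator $R_p(K)$ and argue that $\T_{S_p}=1$ for $p\gg 0$ amounts to $v_p(R_p(K))=0$ for almost all $p$, then invoke transcendence-theoretic expectations (a $p$-adic Schanuel-type principle) and, in the abelian case, $p$-adic $L$-values. The paper instead points to Gras's original heuristic machinery in \cite{Gras-CJM}: equidistribution of generalized Fermat quotients and an empirically supported binomial probability law (see the remark following Heuristic~\ref{heuristicGST} and the supporting argument for Heuristic~\ref{heu-Wief}). Your regulator formulation is structurally sharper and connects to known results in the abelian case, while the paper's probabilistic viewpoint is what actually drives the Borel--Cantelli style counting arguments used later. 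Neither approach yields an unconditional proof, and your conclusion that one must ultimately treat the conjecture as a black box matches exactly how the paper proceeds.
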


The role of~$p$-rationality is highlighted in the following theorem.

\begin{theo}[Satz~$3.1$ of~\cite{MR697311}, Théorème~$2$ of~\cite{MR1124802}]\label{mainthmGS}
Assume that~$S$ contains~$S_p$. The following conditions are equivalent:

$(i)$ The field~$K$ is~$p$-rational and the Frobenius automorphisms at the places of~$S \setminus S_p$ form a basis of~$(G_{S_p})^{p,el}$.

$(ii)$ The map $$\psi_{S,S \setminus S_p,\emptyset} : \coprod_{\q \in S \setminus S_p} \, \G_{\q} \longrightarrow G_S$$ is an isomorphism.
\end{theo}

As a result, once a number field is~$p$-rational, we can find many~sets $N$ of tame primes such that~$G_{S_p \cup N}$ belongs to~$\overline{\D}$. We explore this idea further with~$G^T_S$ in the next section.

\begin{rema}
In~\cite{Wingberg89}, Wingberg established the necessary and sufficient condition on~$(K,S)$ for the free pro-$p$ product decomposition of~$G_S$ in a more general setting. Theorem 6 of~\cite{Wingberg89}, whose proof does not require~$\zeta_p \in K$, recovers Theorem~\ref{mainthmGS}. The free pro-$p$ product decomposition of~$G_{S_p}$ into a free pro-$p$ group and Demushkin groups of rank greater than~$2$, corresponding to the absolute pro-$p$ Galois group of a~$p$-adic local field containing~$\zeta_p$~\cite[Theorem 7.5.14]{NSW}, is in principle possible. This was studied further in~\cite{JaulentSauzet} through explicit examples.
\end{rema}


\subsection{On the freeness of~$G^T_S$}\label{sec-free}
We now aim to extend Theorem~\ref{mainthmGS}, originally stated for~$G_S$, to the setting of~$G^T_S$, while also allowing the set~$S$ not to contain~$S_p$. Generally, the difficulty in studying (pro-$p$) Galois groups comes from the absence of a general theory of Galois cohomology. We use Corollary~\ref{coro_bounded_H2} and  Lemma~\ref{lemm_free}, which offer a more heuristic and direct approach, thereby avoiding the need for deep cohomological machinery. In this respect, our approach differs from that of~\cite{MR1124802, Wingberg89}, which used the Poitou–Tate duality. In particular, we proceed under the assumption that~$G^T_S$ is free.

Nevertheless, proving the freeness of~$G^T_S$ is a highly non-trivial and transcendental problem. Rather than pursuing a direct proof, we provide a heuristic argument which suggests that~$G^T_S$ is free in many cases. As shown in Proposition~\ref{prop_free}, two properties would be essential: the injectivity of~$\varphi_S^T$ and the triviality of~$\T_S^T$.

The relationship between the injectivity of~$\varphi^T_S$ and the Schanuel Conjecture is well-known. For instance, we have:

\begin{prop}\label{schanuel}
Let~$K$ be a Galois extension over an imaginary quadratic field~$k$ with Galois group~$G$. Let~$p$ be a prime that splits in~$k$, and fix a prime~$\p$ of~$k$ above~$p$. Let~$S_{\p}$ denote the set of primes of~$K$ lying above~$\p$, and let~$S$ be a finite set of primes of~$K$ containing~$S_{\p}$. If~$G$ is abelian, then~$\varphi^T_S$ is injective for~$T=\{\l\}$, where~$\l$ is any non-$p$-adic prime of~$K$. Moreover, assuming Schanuel's Conjecture, the injectivity of~$\varphi^{\{\l\}}_S$ holds for arbitrary Galois extension~$K/k$.
\end{prop}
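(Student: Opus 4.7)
First, I would reduce to the case $S = S_\p$: since $\Uu_{S_\p}$ embeds naturally into $\Uu_S$ via the canonical inclusion of factors, injectivity of $\varphi^{\{\l\}}_{S_\p}$ implies injectivity of $\varphi^{\{\l\}}_S$. In this reduced setting the $\Z_p$-ranks of source and target agree. Indeed, since $k$ is imaginary quadratic, $K$ is totally complex with $r_1 = 0$ and $r_2 = [K:k]$; by Dirichlet's $T$-unit theorem, $\rk_\Z E^{\{\l\}} = r_1 + r_2 = r_2$; and since $p$ splits in $k$, $\delta_{S_\p} = \sum_{v \mid \p}[K_v:\Q_p] = [K:k] = r_2$. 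Injectivity of $\varphi^{\{\l\}}_{S_\p} \otimes \Q_p$ then amounts to the non-vanishing of the $p$-adic regulator-type determinant $\det\bigl(\log_p \sigma_v(u_j)\bigr)_{v \mid \p,\, 1 \leq j \leq r_2}$, where $(u_j)$ is a $\Z$-basis of $E^{\{\l\}}$ modulo torsion and $\sigma_v$ runs over embeddings $K \hookrightarrow K_v$ at primes above $\p$.

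Second, for the abelian case, I would appeal to Brumer's theorem on $p$-adic logarithms of algebraic numbers, which states that $\Q$-linearly independent $p$-adic logarithms of algebraic numbers are $\bar\Q$-linearly independent (and hence $\Q_p$-linearly independent). To reduce the non-vanishing of the regulator to such a $\Q$-linear independence statement, I would decompose both $E^{\{\l\}} \otimes \Q_p$ and $\Uu_{S_\p} \otimes \Q_p$ into $\chi$-isotypic components under the abelian action of $G$. The split hypothesis on $p$ in $k$ ensures that corresponding isotypic components have matching dimensions (with $\p$ contributing through the induced representation from decomposition subgroups $D_v$ of $G$), after which Brumer upgrades the multiplicative $\Q$-independence of the $u_j$'s (fundamental units together with a suitable power of a generator of $\l^h$) to the required non-vanishing.

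Third, for the general Galois case, the character decomposition is less tractable and Brumer alone does not suffice. Granting Schanuel's conjecture in its $p$-adic form, $\Q$-linearly independent $p$-adic logarithms of algebraic numbers become algebraically independent over $\bar\Q$, and in particular $\bar\Q$-linearly independent; this replaces Brumer and closes the argument unchanged. The main obstacle in both cases is precisely the transcendence input: unconditionally one can only invoke Brumer-Baker under the abelian hypothesis, whereas for arbitrary $K/k$ no comparable transcendence statement is currently available, which is exactly why Schanuel must be assumed.
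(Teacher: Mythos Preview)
The paper itself gives no argument here; it simply refers to \cite[\S 3]{CM_JNT} and \cite[Chapter III, Corollary 3.6.5]{Gras}. Your outline is in the spirit of what one finds there --- reduce to $S=S_\p$, match ranks, and invoke Baker--Brumer (resp.\ the $p$-adic Schanuel conjecture) --- but two points need repair.

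\medskip

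\textbf{The abelian step.} You propose to decompose $E^{\{\l\}}\otimes\Q_p$ into $\chi$-isotypic pieces. This does not make sense: $\l$ is a single prime of $K$, so $E^{\{\l\}}$ carries no $G$-action unless $\l$ happens to be inert in $K/k$. The fix is to run a two-step argument. First handle the $G$-stable subspace $E_K\otimes\Q_p$, which is the regular representation minus the trivial one; on each nontrivial $\chi$-component the map to $(\Uu_{S_\p})_\chi$ is a $\bar\Q$-linear form in $p$-adic logarithms of conjugates of a Minkowski-type unit, and Brumer's theorem shows it is nonzero. This identifies the image of $E_K$ with $\bigoplus_{\chi\neq 1}(\Uu_{S_\p})_\chi$. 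Second, check that the extra generator $\pi_\l$ has nonzero projection onto the remaining trivial component: that projection is (up to a scalar) $\log_p\sigma_\p\bigl(N_{K/k}(\pi_\l)\bigr)$, and $N_{K/k}(\pi_\l)\in k^\times$ is not a root of unity since it has nontrivial valuation at the prime of $k$ below $\l$.

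\medskip

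\textbf{The Schanuel step.} Your summary ``Schanuel gives $\bar\Q$-linear independence, which replaces Brumer and closes the argument unchanged'' misidentifies both what is needed and what Schanuel buys. Brumer's theorem \emph{already} gives $\bar\Q$-linear independence; the reason it fails for nonabelian $G$ is that the isotypic components have dimension $>1$, so the regulator does not factor into linear forms in logarithms but is a genuine higher-degree polynomial in them. What Schanuel supplies is \emph{algebraic} independence over $\bar\Q$ of $\Q$-linearly independent logarithms, and this is what lets you conclude that such a polynomial is nonzero. You must also explain why the polynomial is not formally zero: writing every $g_j(u_i)$ in terms of a multiplicative basis of the group they generate, the $p$-adic and archimedean regulators become the \emph{same} integer-coefficient polynomial evaluated at $p$-adic (resp.\ real) logarithms, and the archimedean value is nonzero by the Dirichlet $T$-unit theorem for $E^{\{\l\}}$.
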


\begin{proof} See~\cite[\S 3]{CM_JNT} and~\cite[Chapter III, Corollary 3.6.5]{Gras}
\end{proof}

In~\cite{Gras-CJM}, Gras introduced a heuristic argument for Conjecture~\ref{conjGras}. Building on his approach, we anticipate the following heuristic:

\begin{heuristic}\label{heuristicGST}
Let~$K$ be a number field, and let~$T$ be a fixed finite set of primes of~$K$ such that for each~$\l \in T$, the completion~$K_{\l}$ is equal to~$\Q_\ell$ for~$\ell$ lying below~$\l$. Assume that~$|T| {\leq} r_2$. Then, under Conjecture~\ref{conjGras}, the number of~$p$ for which~$\ker(\varphi^T_{S_p}) \neq 1$ or~$\T^T_{S_p} \neq 1$ is expected to be finite.
\end{heuristic}

In fact, if~$G_S$ is known to be free, it becomes straightforward to obtain a sufficient condition for the freeness of $G^T_S$. To streamline our discussion, we introduce the following definition.

\begin{defi} Let~$S$ and~$T$ be two finite disjoint sets of places of~$K$. A set of places~$W$, disjoint with~$S \cup T$, is said to be~$(S,T)$-primitive if the Frobenius automorphisms in~$(G^T_S)^{p,el}$ at~$W$ are linearly independent over~$\F_p$. If the Frobenius automorphisms in~$(G^T_S)^{p,el}$ at~$W$ are a basis over~$\F_p$, we say that~$W$ is maximal~$(S,T)$-primitive. In the special case~$T=\emptyset$, we simply call such a set~$S$-primitive.
\end{defi}

We note that in~\cite{MR1124802}, the term  \textit{primitivity} was used to refer to~$S_p$-primitivity in the context of~$p$-rational fields. We have the following proposition

\begin{prop}\label{S-primitive cons} Suppose that~$G_S$ is a free pro-$p$ group. For any~$S$-primitive set~$T$, the group~$G^T_S$ is also free pro-$p$. Moreover, if~$\ker (\varphi_S)=1$, then we also have~$\ker( \varphi^T_S)=1$.
\end{prop}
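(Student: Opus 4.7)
The plan is to first identify $G_S^T$ explicitly as a quotient of the free pro-$p$ group $G_S$, and then invoke Propositions~\ref{prop_free} and~\ref{Z_p-rank}. The starting observation is that every $\l \in T$ is tame and lies outside $S$, hence is unramified in $K_S/K$; its decomposition group in $G_S$ is therefore the pro-cyclic subgroup topologically generated by a Frobenius $\sigma_\l$. By the very definition of $K_S^T$, one has $G_S^T = G_S/N$, where $N$ is the closed normal subgroup of $G_S$ generated by $\{\sigma_\l : \l \in T\}$.

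For the first assertion, I would exploit the fact that $S$-primitivity of $T$ says exactly that the $\sigma_\l$'s are $\F_p$-linearly independent in $(G_S)^{p,el} = G_S/\mathrm{Frat}(G_S)$. Since $G_S$ is free pro-$p$ of rank $d \coloneq h^1(G_S)$, the Burnside Basis theorem allows one to complete $\{\sigma_\l\}_{\l \in T}$ to a minimal topological generating system $\{\sigma_\l\}_{\l \in T} \cup \{x_1, \ldots, x_{d-|T|}\}$ of $G_S$. Letting $F'$ denote the free pro-$p$ group on generators $y_1, \ldots, y_{d-|T|}$, the universal property of free pro-$p$ groups produces two morphisms: a surjection $G_S \twoheadrightarrow F'$ sending $\sigma_\l \mapsto 1$ and $x_i \mapsto y_i$ (which factors through $G_S^T$), and a map $F' \to G_S^T$ sending $y_i \mapsto \overline{x_i}$. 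These agree with the identity on topological generators on both sides, so they are mutually inverse; hence $G_S^T \cong F'$ is free pro-$p$ of rank $d-|T|$.

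For the moreover part, assume in addition $\ker(\varphi_S)=1$. Since $G_S$ is free, $\T_S = 1$; Proposition~\ref{Z_p-rank} therefore gives $r_S = \delta_S - (r_1+r_2-1)$, which equals $d$. From the first part, $(G_S^T)^{\mathrm{ab}} \simeq \Z_p^{d-|T|}$, so that
$$r_S^T = d - |T| = \delta_S - (r_1 + r_2 - 1 + |T|).$$
Since $T$ is disjoint from $S$, the converse direction of Proposition~\ref{Z_p-rank} then delivers $\ker(\varphi_S^T) = 1$.

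I do not anticipate a serious obstacle: the proof reduces to the classical fact that killing part of a minimal basis of a free pro-$p$ group yields a free pro-$p$ group on the remaining basis elements. The one point requiring some care is to verify that the kernel of $G_S \twoheadrightarrow F'$ really equals the normal closure $N$, and not merely contains it; this is precisely the purpose of exhibiting the section $F' \to G_S^T$, since the two mutually inverse morphisms force equality and thus the freeness.
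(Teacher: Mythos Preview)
Your proof is correct. One harmless slip: you assert that every $\l \in T$ is ``tame'', which in the paper's terminology means $N(\l) \equiv 1 \pmod p$ and is not part of the hypotheses; but you only use that $\l \notin S$, hence is unramified in $K_S/K$, so the argument is unaffected.

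Your route for the first assertion differs from the paper's. You argue purely group-theoretically: since the $\sigma_\l$ are independent modulo $\Frat(G_S)$, extend them to a free basis, and then explicitly exhibit $G_S^T$ as free on the complementary generators via mutually inverse maps. The paper instead uses the five-term inflation--restriction sequence
\[
0 \to H^1(G_S^T) \to H^1(G_S) \to H^1(R)^{G_S^T} \to H^2(G_S^T) \to 0
\]
(with $R = N$), combining $h^1(G_S^T)=h^1(G_S)-|T|$ from $S$-primitivity with the bound $d_p H^1(R)^{G_S^T} \le |T|$ to force $h^2(G_S^T)=0$. Your argument is more elementary and directly constructive; the paper's cohomological approach is the one that would adapt more readily to situations where $G_S$ is not free but has controlled $H^2$. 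For the ``moreover'' part, both proofs amount to the same rank computation via Proposition~\ref{Z_p-rank}; you invoke its converse direction, while the paper spells out the rank identity $r_S - r_S^T = |T| + \rk_{\Z_p}\ker(\varphi_S) - \rk_{\Z_p}\ker(\varphi_S^T)$ directly.
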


\begin{proof}
Let~$R$ be the normal subgroup of~$G_S$ generated by the Frobenius automorphisms at~$T$. By the freeness of~$G_S$, we have the exact sequence
$$ 0 \to H^1(G^T_S) \to H^1(G_S) \to H^1(R)^{G^T_S} \to H^2(G^T_S) \to 0. $$
From the~$S$-primitivity of~$T$, we deduce that~$h^1(G^T_S)=h^1(G_S)-|T|$.  Moreover, since~$R$ is the closed normal subgroup generated by the Frobenius automorphisms at~$T$, we have~$d_p H^1(R)^{G^T_S}\leq  |T|$. Thus, equality must hold:~$d_p H^1(R)^{G^T_S}=|T|$, which implies that~$h^2(G^T_S)=0$, and hence~$G^T_S$ is free. The second claim follows from the chain of equalities
$$ |T| = d_p G_{S} - d_p G^T_{S} = r_{S}- r^T_{S} =
\rk_{\Z_p}\ker (\varphi_{S}) - \rk_{\Z_p}\ker (\varphi^T_{S}) + |T|, $$
which shows that~$\ker (\varphi_S^T)=1$, since by hypothesis we have~$\ker (\varphi_S)=1$. 
\end{proof}

As a consequence of the Chebotarev density theorem, once the freeness of~$G_S$ is established, one can find many sets~$T$ such that~$G^T_S$ is also free. In contrast, the heuristic we employ takes a different perspective: it fixes~$K$ and~$T$, and studies the freeness of~$G^T_{S_p}$ as~$p$ varies.

\begin{proof}[Supporting argument for Heuristic~\ref{heuristicGST}]
By the Gras conjecture, we can assume without loss of generality that~$G_{S_p}$ is free of rank~$r_2+1$. According to the Chebotarev density theorem, for each \textit{fixed}~$p$, the Frobenius automorphism of primes~$\l$ in~$(G_{S_p})^{p,el}$ is equidistributed as~$\l$ \textit{varies}. Since the Dirichlet density of the set of primes~$\l$ with~$K_{\l} \neq \Q_\ell$ is zero, the equidistribution still holds when restricting to primes with~$K_{\l}=\Q_\ell$.
 Hence, for a fixed~$\l \in T$, it is reasonable to heuristically expect the Frobenius automorphism in~$G^{p,\mathrm{el}}_{S_p}$ at~$\l$ to be equidistributed as~$p$ varies. The group~$G^T_{S_p}$ is free of rank~$r_2+1-|T|$ unless the Frobenius automorphisms in~$(G_{S_p})^{p,el}$ associated to the places in~$T$ are linearly dependent. Hence, the probability~$\P(p,K,T)$ that~$G^T_{S_p}$ is not free of rank~$r_2+1-|T|$ is equal to
$$ 1 - \prod^{|T|}_{i=1} \bigg ( 1 - \frac{1}{p^{r_2-i+2}} \bigg ). $$
One can check that the infinite sum~$\sum_p \P(p,K,T)$ is bounded above. The claim about finiteness then follows from the Borel-Cantelli lemma (see the beginning of~\cite[\S 4.1]{Gras-CJM}).
\end{proof}

\begin{rema}
\begin{itemize}
\item[(i)] While our heuristic is formulated under the assumption of the Gras Conjecture, it is worth noting that the argument in~\cite{Gras-CJM} can be used to heuristically recover both the conjecture on~$G_{S_p}$ and the same expectation on~$G^T_{S_p}$, simultaneously and from the same reasoning. Unlike our heuristic,~\cite{Gras-CJM} uses the equidistribution of the generalized Fermat quotient (ex.~\cite[\S 4.2.1.(ii)]{Gras-CJM}) and the heuristic on \textit{the existence of a binomial probability law} (\cite[\S 4.4]{GrasFQ},~\cite[\S 7]{Gras-CJM}), as observed through numerical experiments.

\item[(ii)] Our assumption that~$K_{\l}=\Q_\ell$ was made mainly for simplicity. For example, if~$K$ is a Galois number field and the decomposition subgroup~$D_{\l}$ of~$G \coloneq G(K/\Q)$ at~$\l$ is nontrivial, then the Frobenius automorphism at~$\l$ lies in the sub-$\F_p[Gal(K/\Q)]$-module of~$(G_{S_p})^{p,el}$ on which~$D_{\l}$ acts trivially. The presence of such a non-trivial Galois action can create an obstruction to deducing the finiteness of the set of primes as predicted in Heuristic~\ref{heuristicGST}. Nevertheless, the overall heuristic suggests that such primes, though possibly infinite, are still very rare.
\end{itemize}
\end{rema}

\begin{rema}
We may attempt to apply the idea from the proof of Proposition~\ref{S-primitive cons} to obtain a free quotient~$G^T_{S_p}$ from a non-free pro-$p$ group~$G_{S_p}$. However, this is not straightforward. Controlling the Frobenius elements in~$G_S^{ab}$ at~$T$ is subtle, and for~$G^{T,ab}_{S_p}$ to be torsion-free, the map~$\varphi^T_{S_p}$ must fail to be injective. As a consequence, under the Leopoldt Conjecture, for any non-$p$-adic prime~$\l$, the group~$G^{\{\l\}}_{S_p}$ is not free unless~$G_{S_p}$ is.
\end{rema}

\subsection{Proof of Theorem~\ref{theoA}} \label{sec-mainresult}

We now prove Theorem~\ref{theoA}, beginning with the following result: 

\begin{prop}\label{abelianization}
 Assume that~$\varphi^T_S$ is injective. Let~$N=\{\q_1,\cdots, \q_s\}$ be a finite set of tame primes. Then $$Gal(K_{S\cup N}^{T,ab}/K_S^{T,ab}) \simeq \Z/p^{n_1} \times \cdots \times \Z/p^{n_r}, $$ where~$n_i =v_p (N(\q_i)-1)$. If moreover~$\T_S^T=1$, then we have
$$G_{S\cup N}^{T,ab}\simeq \Z_p^{r_S^T} \times \Z/p^{n_1} \times \cdots \times \Z/p^{n_r}\cdot$$
\end{prop}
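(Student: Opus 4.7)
The plan is to compare $G_S^{T,ab}$ and $G_{S\cup N}^{T,ab}$ through the classical class-field-theoretic exact sequence and extract $\Gal(K_{S\cup N}^{T,ab}/K_S^{T,ab})$ via the snake lemma.

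For any finite set $S'$ of places, class field theory (see \cite[Chapter X]{NSW}) provides
$$
0 \longrightarrow \Uu_{S'}/\overline{\varphi_{S'}^T(E^T)} \longrightarrow G_{S'}^{T,ab} \longrightarrow A_K^T \longrightarrow 0,
$$
where $A_K^T$ denotes the pro-$p$ part of $\Cl(K)/\langle [v]:v\in T\rangle$. The crucial observation is that $A_K^T$ is independent of $S'$: it is the Galois group of the Hilbert $T$-class field of $K$, which is contained in every $K_{S'}^{T,ab}$ as the maximal subextension on which the inertia at $S'$ becomes trivial. Writing this sequence for both $S'=S$ and $S'=S\cup N$ and using the canonical surjection $G_{S\cup N}^{T,ab}\twoheadrightarrow G_S^{T,ab}$, we obtain a commutative diagram whose right vertical arrow is the identity on $A_K^T$ and whose left vertical arrow is induced by the projection $\Uu_{S\cup N}=\Uu_S\times\Uu_N\twoheadrightarrow\Uu_S$. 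The snake lemma then yields
$$
\Gal(K_{S\cup N}^{T,ab}/K_S^{T,ab})\;\simeq\;\ker\!\Bigl(\Uu_{S\cup N}/\varphi_{S\cup N}^T(E^T)\twoheadrightarrow \Uu_{S}/\varphi_S^T(E^T)\Bigr).
$$

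Computing this kernel is the main step, and it is the only place where the injectivity of $\varphi_S^T$ intervenes. If $(u_S,u_N)\in\Uu_S\times\Uu_N$ represents a class in the kernel, then $u_S=\varphi_S^T(e)$ for an $e\in E^T$ which is \emph{unique} by the injectivity hypothesis. Subtracting off $\varphi_{S\cup N}^T(e)=(\varphi_S^T(e),\varphi_N^T(e))$ shows that the class equals $[(1,u_N\cdot\varphi_N^T(e)^{-1})]$, so the kernel is the image of the map $v\mapsto[(1,v)]$ from $\Uu_N$; the same uniqueness of $e$ also makes this map injective. Since each $\q_i\in N$ is tame, $\Uu_{\q_i}\simeq\Z/p^{n_i}$ with $n_i=v_p(N(\q_i)-1)$, which proves the first claimed isomorphism.

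The second assertion is then formal: under $\T_S^T=1$ we have $G_S^{T,ab}\simeq\Z_p^{r_S^T}$, a projective $\Z_p$-module, so the short exact sequence
$$
0 \to \prod_{i=1}^s\Z/p^{n_i} \to G_{S\cup N}^{T,ab} \to \Z_p^{r_S^T} \to 0
$$
splits, yielding the desired product decomposition.

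The main obstacle I foresee is the preliminary verification that $A_K^T$ really is the right-hand term of the class-field-theoretic sequence for both $S$ and $S\cup N$, so that the diagram genuinely commutes with identity on the right; once this is in hand, the snake lemma combined with the injectivity of $\varphi_S^T$ makes the rest of the argument essentially automatic.
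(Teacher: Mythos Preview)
Your proof is correct and follows essentially the same route as the paper: both use the class-field-theoretic exact sequence with cokernel $\Cl_T$ (your $A_K^T$), apply the snake lemma to identify $\Gal(K_{S\cup N}^{T,ab}/K_S^{T,ab})$ with $\ker\bigl(\Uu_{S\cup N}/\varphi_{S\cup N}^T(E^T)\to\Uu_S/\varphi_S^T(E^T)\bigr)$, and then deduce the second assertion from the $\Z_p$-freeness of $G_S^{T,ab}$. The only cosmetic difference is that the paper computes this kernel via a second application of the snake lemma to the diagram $0\to E^T\to\Uu_{S\cup N}\to\Uu_{S\cup N}/\varphi_{S\cup N}^T(E^T)\to 0$ mapping onto $0\to E^T\to\Uu_S\to\Uu_S/\varphi_S^T(E^T)\to 0$, whereas you carry out the equivalent element-chase by hand; your concern about $A_K^T$ being independent of $S'$ is addressed in the paper simply by writing the four-term sequence with $\Cl_T$ on the right for both $S$ and $S\cup N$.
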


\begin{proof}
From the commutative diagram
\begin{equation*}
\begin{tikzcd}
0 \arrow[r] & E^T \arrow[r] \arrow[d, no head, equal] & \Uu_{S \cup N} \arrow[r] \arrow[d, two heads] & (G^T_{S \cup N})^{ab} \arrow[r] \arrow[d, two heads] & \Cl_T \arrow[r] \arrow[d, no head, equal] & 0 \\
0 \arrow[r] & E^T \arrow[r]                                & \Uu_{S} \arrow[r]                             & (G^T_{S})^{ab} \arrow[r]                             & \Cl_T \arrow[r]           & 0
\end{tikzcd}
\end{equation*}
and the Snake lemma, we infer that~$Gal(K_{S\cup N}^{T,ab}/K_S^{T,ab})$ is isomorphic to the kernel of the map~$\Uu_{S \cup N}/\varphi_{S \cup N}(E^T) \to \Uu_{S}/\varphi_{S}(E^T)$. Here,~$\Cl_T$ denotes the quotient of the~$p$-class group of~$K$ by the subgroup generated the ideal classes of the primes in~$T$. We deduce the claim by applying the Snake lemma once more to
\begin{equation*}
\begin{tikzcd}
0 \arrow[r] & E^T \arrow[d, no head, equal] \arrow[r] & \Uu_{S \cup N} \arrow[d, two heads] \arrow[r] & \Uu_{S \cup N}/\varphi_{S \cup N}(E^T) \arrow[d, two heads] \arrow[r] & 0 \\
0 \arrow[r] & E^T \arrow[r]                                & \Uu_S \arrow[r]                               & \Uu_S/\varphi_S(E^T) \arrow[r]                               & 0
\end{tikzcd}
\end{equation*}
The second claim follows from the first one by the~$\Z_p$-freeness of~$(G^T_S)^{ab}$.
\end{proof}

The following result corresponds to Theorem~\ref{theoA} in the introduction. For additional explanation, see also Remark~\ref{rema_arithmetic_main}.

\begin{theo}[Theorem~\ref{theoA}] \label{theo_arithmetic_main} Let~$S$,~$T$,~$N\subset M$,  be four sets of primes of~$K$ such that
\begin{enumerate} 
\item[$(i)$] the map~$\varphi_S^T$ is injective,
 \item[$(ii)$] the torsion part~$\T_S^T$ of~$(G_S^T)^{ab}$  is trival,
 \item[$(iii)$] the primes~$\q$ in~$N$ are tame,
 \item[$(iv)$]~$M$ is maximal~$(S,T)$-primitive.
 
\end{enumerate}
 Then the natural map~$\psi_{S\cup N, N, M\setminus N}^T : \left(\coprod_{\q \in N} \G_{\q} \right) \coprod  \left( \coprod_{\l \in M\backslash N } \Ff_\l \right)  \longrightarrow G_{S\cup N}^T$ is an isomorphism. In particular, the pro-$p$ group~$G_{S\cup N}^T$ satisfies~$(\PP_{m_N})$, and consequently the~$m_N$-strong Massey vanishing property, where~$m_N \coloneq min \{v_p(N(\q)-1), \q \in N\}$.
\end{theo}

\begin{proof}
By Proposition~\ref{prop_free} and the assumptions~$(i)$ and~$(ii)$, the group~$G^T_S$ is free of rank~$r^T_S$. Consider the natural map
$$\psi^T_{S\cup N,N,M\setminus N} : \left(\coprod_{\q \in N} \G_{\q} \right) \coprod  \left( \coprod_{\l \in M\backslash N } \Ff_\l \right)\longrightarrow G^T_{S\cup N}.$$

By assumption~$(iv)$, the map $\psi^T_{S\cup N,N,M\setminus N}$ is surjective. Hence, by Proposition \ref{abelianization}, it induces an epimorphism on the abelianizations between isomorphic finitely generated $\Z_p$-modules. By the structure theorem for finitely generated modules over $\Z_p$, it follows that the map on the abelianizations is an isomorphism.

 Moreover, assumption~$(i)$ and Corollary~\ref{coro_bounded_H2} together imply that~$H^2(G^T_S, \Q/\Z)=0$. Therefore by Lemma~\ref{lemm_free}, the map~$\psi^T_{S\cup N,N,M\setminus N}$ is an isomorphism. As a consequence, the group~$G^T_{S \cup N}$ belongs to the class~$\overline{\D_{m_N}}$ and, by Theorem~\ref{stab coprod}, it satisfies the property~$(\PP_{m_N})$.
\end{proof}

\begin{rema}\label{rema_arithmetic_main}
    The hypothesis~$(G_S^T)^{ab}\simeq \Z_p^{r_S^T}$ in Theorem~\ref{theoA} immediately yields ~$(ii)$ in Theorem~\ref{theo_arithmetic_main}. Furthermore, if~$T$ is disjoint from~$S$, then by Proposition~\ref{Z_p-rank}, condition~$(i)$ also holds. Applying the arguments from the previous proof and using the Chebotarev density theorem, we can find infinitely many sets~$N$ of size~$r_S^T$ such that $$\psi_{S\cup N, N, \emptyset}^T : \left(\coprod_{\q \in N} \G_{\q} \right)  \longrightarrow G_{S\cup N}^T$$
    is an isomorphism. By using Proposition \ref{unipotent quotients} (and Remark \ref{unipotent quotients 2} with $k_2=0$), this establishes Theorem~\ref{theoA} as stated in the introduction.
\end{rema}

\begin{exem}
Consider the multiquadratic field~$K=\Q(\sqrt{-1}, \sqrt{2}, \sqrt{7}, \sqrt{19})$, and take~$p=3$. Let~$N$ be a set consisting of one prime of~$K$ lying over each of the rational primes~$19, 31, 199$, and let~$T$ be a set consisting of one prime of~$K$ lying over each of~$53$ and~$89$. Then, we have
$$ G_{S_3 \cup N}^T \cong \coprod_{v \in N} \G_v \coprod \mathcal{F} $$
where~$\mathcal{F}$ is the free pro-$3$ group of rank~$4$.
\end{exem}

\begin{coro}\label{coro-uni-m=1}Let~$K$ be a number field, and~$S \subset S_p$ such that:
\begin{enumerate} \item[$(i)$] the map~$\varphi_S$ is injective,
 \item[$(ii)$]~$\T_S=1$.
 \end{enumerate}
Assume moreover that~$r_S\geq 2$. Then there exists infinitely  many sets~$N$ of tame primes, with~$|N|= r_S$, such that $$G_{S\cup N} \twoheadrightarrow \U_{2r_S+1}\cdot$$ As a consequence, there exists a~$\U_{2r_S+1}$-extension of~$K$ unramified outside~$S \cup N$.
\end{coro}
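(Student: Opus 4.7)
The plan is to deduce the corollary directly from Theorem~\ref{theo_arithmetic_main} (i.e.\ Theorem~\ref{theoA} in the introduction), specialised to the case $T = \emptyset$, and then to apply Proposition~\ref{unipotent quotients} with $m=1$.

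First, since $\varphi_S$ is injective and $\T_S = 1$, Proposition~\ref{prop_free} shows that $G_S$ is free pro-$p$ of rank $r_S$. By Remark~\ref{rema_arithmetic_main}, the Chebotarev density theorem produces infinitely many sets $N$ of tame primes with $|N| = r_S$ that are maximal $S$-primitive. Choosing $M = N$ in Theorem~\ref{theo_arithmetic_main} (applied with $T = \emptyset$) gives an isomorphism
$$\psi_{S \cup N, N, \emptyset} \colon \coprod_{\q \in N} \G_{\q} \xrightarrow{\sim} G_{S \cup N}.$$

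Second, since each $\q \in N$ is tame, Remark~\ref{normcondition} places $\G_{\q}$ in $\D_1$; moreover, as $\q$ is non-$p$-adic with $N(\q) \equiv 1 \pmod{p}$, the group $\G_{\q}$ is Demushkin of rank $2$. Consequently, $G_{S \cup N}$ is a coproduct of $r_S \geq 2$ Demushkin groups of rank $2$, so it lies in $\overline{\D_1} \setminus \D_1$ with
$$h^1(G_{S \cup N}) \,=\, \sum_{\q \in N} h^1(\G_{\q}) \,=\, 2 r_S.$$

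Third, applying Proposition~\ref{unipotent quotients} with $m = 1$ and $n = 2 r_S$ gives the desired surjection $G_{S \cup N} \twoheadrightarrow \U_{2 r_S + 1}$, since the condition $n \leq h^1(G_{S \cup N})$ is saturated by equality. The corresponding $\U_{2 r_S + 1}$-extension of $K$ is a subextension of $K_{S \cup N}/K$ and is therefore unramified outside $S \cup N$, yielding the final assertion. There is no real obstacle: the work has already been packaged into Theorem~\ref{theo_arithmetic_main} and Proposition~\ref{unipotent quotients}. The only conceptual point worth emphasising is that each tame prime in $N$ contributes \emph{two} generators (from the Demushkin factor of rank $2$) rather than one, which is precisely what doubles the admissible unipotent rank from $r_S + 1$ (the purely wild bound) to $2 r_S + 1$.
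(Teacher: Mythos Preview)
Your proof is correct and follows essentially the same approach as the paper, which simply states (in the remark immediately following the corollary) that the result is a direct consequence of Theorem~\ref{theo_arithmetic_main} and Proposition~\ref{unipotent quotients}. You have merely unpacked the details of that deduction: choosing $T=\emptyset$, $M=N$ maximal $S$-primitive of size $r_S$, and then invoking Proposition~\ref{unipotent quotients} with $m=1$ and $n=2r_S$.
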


\begin{rema}
    Corollary~\ref{coro-uni-m=1} is a direct consequence of Theorem~\ref{theo_arithmetic_main} and Proposition~\ref{unipotent quotients}. Observe that if~$r_S=1$ then, for~$N=\{\q\}$, the group~$G_{S\cup N}$  is a Demushkin group of rank~$2$. Thus~$\bU_3$ is not a quotient of~$G_{S\cup N}$.
\end{rema}


\subsection{Lifting Massey Products to~$\Z/p^m$}
In this subsection, we enlarge~$S$ to construct surjective unipotent representations~$G^T_{S\cup N} \to \U_{n}(\Z/p^m)$ for~$m > 1$.
Theorem~\ref{theo_arithmetic_main} provides situations where~$G_{S\cup N}^T$ lies in~$\overline{\D_{m_N}}$ and satisfies~$(\PP_{m_N})$. By Proposition~\ref{unipotent quotients}, this leads to a surjective homomorphism~$G_{S \cup N}^T \to \U_{2r^T_S+1}$. However, in certain cases, the conditions~$(iii)$ and~$(iv)$ of Theorem~\ref{theo_arithmetic_main} may be incompatible, creating an obstruction to constructing unipotent representations with both rank~$n=2r^T_S+1$ and large~$m$.

\subsubsection{Obstruction to full rank with large coefficients}

Let~$\nu$ be the largest integer~$\geq 0$ such that~$K(\zeta_{p^\nu})=K(\zeta_p)$. Let~$K_{1,p}$ be the~$\F_p$-extension of~$K$ contained in~$K(\zeta_{p^{\nu+1}})$. For simplicity of notation, we write~$K_1$ for~$K_{1,p}$ throughout, except in the supporting argument of  Heuristic~\ref{heu-Wief}. We note that~$K_{1}(\zeta_p)=K_{1}(\zeta_{p^{\nu+1}})$.

\begin{prop}\label{primetop}
Let~$K$ be a number field and let~$S$ and~$T$ be finite sets of primes of~$K$ such that~$\ker(\varphi^T_S)=1$ and~$\T^T_S=1$. There exist infinitely many sets~$N$ of tame primes of size~$r^T_S$ such that
$$ G^T_{S \cup N} \cong \coprod_{\q \in N} \G_{\q}, $$
where each~$\G_{\q}$ is a Demushkin group in~$\D_{\nu}$. As a consequence, if~$r^T_S \geq 2$, there exists a surjective homomorphism
$$ G^T_{S \cup N} \twoheadrightarrow \U_{2r^T_S+1}(\Z/p^{\nu}). $$
\end{prop}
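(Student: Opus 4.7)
The plan is to combine the freeness of $G^T_S$ (guaranteed by Proposition \ref{prop_free} under the hypotheses $\ker(\varphi^T_S) = 1$ and $\T^T_S = 1$) with a Chebotarev argument that selects tame primes simultaneously $\nu$-tame and maximally $(S,T)$-primitive. Concretely, $G^T_S$ is free pro-$p$ of rank $r^T_S$, so $(G^T_S)^{p,el} \cong \F_p^{r^T_S}$, and once such a set $N$ is found the coproduct decomposition will follow directly from Theorem \ref{theo_arithmetic_main}.

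First I would translate the two conditions on the $\q_i$ into splitting data in a single compositum. By Remark \ref{normcondition}, a prime $\q$ is $\nu$-tame exactly when it splits completely in $K(\zeta_{p^\nu}) = K(\zeta_p)$, while primitivity concerns the Frobenius in the elementary abelian $p$-extension $K^{T,p,el}_S/K$. The observation driving the argument is that $[K(\zeta_p):K]$ divides $p-1$ and is thus coprime to $p$, so $K(\zeta_p)$ and $K^{T,p,el}_S$ are linearly disjoint over $K$; their compositum has Galois group $\mathrm{Gal}(K(\zeta_p)/K) \times (G^T_S)^{p,el}$ over $K$. By the Chebotarev density theorem applied to this compositum, given any chosen basis $f_1,\dots,f_{r^T_S}$ of $(G^T_S)^{p,el}$ I can find infinitely many families $N = \{\q_1,\dots,\q_{r^T_S}\}$ whose Frobenius is trivial on $K(\zeta_p)/K$ and equals $f_i$ on $K^{T,p,el}_S$.

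Having produced such an $N$, I would apply Theorem \ref{theo_arithmetic_main} with $M = N$: conditions $(i)$--$(ii)$ hold by hypothesis, $(iii)$ because $\nu$-tame primes are tame, and $(iv)$ by the primitivity we arranged. This yields $G^T_{S\cup N} \cong \coprod_{\q \in N} \G_\q$, and because each $\q_i$ was chosen $\nu$-tame, each $\G_{\q_i}$ is a Demushkin group of rank $2$ in $\D_\nu$ by definition of that class. For the consequence, when $r^T_S \geq 2$ the group $G^T_{S\cup N}$ lies in $\overline{\D_\nu}\setminus \D_\nu$, and Proposition \ref{cohomology of coprod} gives $h^1(G^T_{S\cup N}) = \sum_{\q \in N} h^1(\G_\q) = 2r^T_S$; Proposition \ref{unipotent quotients} (applied with $m = \nu$, $n = 2r^T_S$) then delivers the required surjection onto $\U_{2r^T_S+1}(\Z/p^\nu)$.

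The main obstacle, and really the whole point of the proposition, is the linear disjointness of $K(\zeta_p)$ and $K^{T,p,el}_S$ over $K$. This is the technical input that makes the two Chebotarev conditions compatible; without it one could not simultaneously demand that the Frobenius be trivial in $K(\zeta_p)/K$ and span $(G^T_S)^{p,el}$. The definition of $\nu$ is chosen precisely so that this disjointness holds, which explains why the coefficients $\Z/p^\nu$, and not $\Z/p^{\nu+1}$, appear as the optimal output in the final unipotent surjection.
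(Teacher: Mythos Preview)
Your proposal is correct and follows essentially the same route as the paper's proof: both apply Chebotarev in the compositum $K^{T,p,el}_S(\zeta_p)$, using that $K(\zeta_p)\cap K^{T,p,el}_S=K$ (equivalently, your linear disjointness observation) to select primes that are simultaneously $\nu$-tame and maximally $(S,T)$-primitive, then invoke Theorem~\ref{theo_arithmetic_main} and Proposition~\ref{unipotent quotients}. Your write-up is in fact slightly more explicit than the paper's about why the disjointness holds and why $\nu$ is the optimal exponent.
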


\begin{proof}
By Chebotarev density theorem, we can find a finite set~$N$ of primes whose Frobenius at~$Gal(K^{T,p,el}_S(\zeta_p)/K)$ form a basis of the subgroup~$Gal(K^{T,p,el}_S(\zeta_p)/K(\zeta_p))$. Via the isomorphism 
$$Gal(K^{T,p,el}_S(\zeta_p)/K(\zeta_p)) \cong (G^T_S)^{p,el},$$
this implies that~$N$ is~$(S,T)$-primitive. Moreover, since the Frobenius automorphism at each~$\q \in N$ fixes~$K(\zeta_p) = K(\zeta_{p^{\nu}})$, it follows from Remark~\ref{normcondition} that each~$\q$ is~$\nu$-tame. Therefore, by Theorem~\ref{theo_arithmetic_main}, the group~$G^T_{S \cup N}$ satisfies the property~$(\PP_{\nu})$. The conclusion then follows from Proposition~\ref{unipotent quotients}.
\end{proof}

\begin{prop}
Let~$K$ be a number field, and let~$S$ and~$T$ be finite sets of primes of~$K$ such that~$\ker(\varphi^T_S)=1$ and~$\T^T_S=1$. Let~$m > \nu$ be an integer. 
\begin{enumerate}
\item[$(1)$] If~$K_{1}$ is not contained in~$K^{T,p,el}_S$, then there exist infinitely many sets~$N$ of tame primes such that~$G^T_{S \cup N}$ is isomorphic to the coproduct of~$r^T_S$ Demushkin groups, each belonging to the class~$\D_m$. As a consequence, if~$r^T_S \geq 2$, there is a surjective homomorphism~$G^T_{S \cup N} \to \U_{2r^T_S+1}(\Z/p^m)$.

\item[$(2)$] If~$K_{1} \subseteq K^{T,p,el}_S$, then no such set~$N$ of size~$r^T_S$ exists. However, there exist sets~$N_0$ with~$|N_0| = r^T_S - 1$ such that, for any~$M \supseteq N_0$, the group~$G^T_{S \cup M}$ is a coproduct of the form
$$ \bigg( \coprod_{\q \in N_0} \G_{\q} \bigg) \coprod G',$$
where each~$\G_{\q}$ for~$\q \in N_0$ belongs to~$\D_m$, and~$G'$ is either isomorphic to~$\Z_p$ or a Demushkin group in the class~$\D_{\nu}$ but not in~$\D_{\nu+1}$. As a consequence, if~$r^T_S \geq 2$, there is a surjective homomorphism~$G^T_{S \cup N'} \to \U_{2r^T_S}(\Z/p^m)$.
\end{enumerate}
\end{prop}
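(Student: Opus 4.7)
The plan is a Chebotarev-theoretic construction in two stages, combined with Theorem~\ref{theo_arithmetic_main} and a careful application of the $m$-strong Massey vanishing on each factor of the resulting coproduct. The key geometric input is that $K(\zeta_{p^m})/K$ has a unique $\F_p$-subextension, namely $K_1$: indeed $\Gal(K(\zeta_{p^m})/K)\cong \Z/p^{m-\nu}\times \Z/d$ with $d\mid p-1$. Hence $K^{T,p,el}_S\cap K(\zeta_{p^m})\in\{K,K_1\}$, giving the dichotomy of the statement, and being $m$-tame forces the Frobenius to lie in the hyperplane $H=\ker\bigl((G^T_S)^{p,el}\twoheadrightarrow \Gal(K_1/K)\bigr)$ whenever $K_1\subseteq K^{T,p,el}_S$.

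Case $(1)$ is direct: since $K^{T,p,el}_S\cap K(\zeta_{p^m})=K$, the compositum has Galois group the direct product of the two factors, so Chebotarev furnishes infinitely many sets $N$ of $r^T_S$ primes, each $m$-tame and whose Frobenii form an $\F_p$-basis of $(G^T_S)^{p,el}$. Theorem~\ref{theo_arithmetic_main} delivers the coproduct of $\D_m$-Demushkin groups, and Proposition~\ref{unipotent quotients} produces the surjection onto $\U_{2r^T_S+1}(\Z/p^m)$.

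In Case $(2)$, the Frobenius of any $m$-tame prime is confined to the hyperplane $H$ of dimension $r^T_S-1$, so no $r^T_S$ such Frobenii can span $(G^T_S)^{p,el}$; this proves the non-existence. To build $N_0$, apply Chebotarev in the fiber product $(G^T_S)^{p,el}\times_{\Gal(K_1/K)}\Gal(K(\zeta_{p^m})/K)$ to obtain $r^T_S-1$ $m$-tame primes whose Frobenii form a basis of $H$. A separate Chebotarev argument in $K^{T,p,el}_S\cdot K(\zeta_{p^{\nu+1}})/K$ produces a completing prime $\q'$: we arrange $\q'$ to have nontrivial Frobenius on $K_1$ (so $N_0\cup\{\q'\}$ is maximal $(S,T)$-primitive), and either trivial on $K(\zeta_p)$ (making $\q'$ tame, hence $\nu$-tame since $K(\zeta_p)=K(\zeta_{p^\nu})$, but not $(\nu+1)$-tame) or nontrivial on $K(\zeta_p)$ (making $\q'$ not tame, hence $\G_{\q'}\cong\Z_p$). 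Theorem~\ref{theo_arithmetic_main} applied either with $N=N_0$ (so $\q'\in M\setminus N_0$ contributes $\Ff_{\q'}\cong\Z_p$) or with $N=N_0\cup\{\q'\}$ (so $\q'$ contributes $\G_{\q'}$, a Demushkin in $\D_\nu\setminus \D_{\nu+1}$) yields the two advertised coproduct structures.

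For the surjection $G^T_{S\cup M}\twoheadrightarrow\U_{2r^T_S}(\Z/p^m)$, we adapt the proof of Proposition~\ref{unipotent quotients}: form a tuple $(\chi_1,\ldots,\chi_{2r^T_S-1})$ by alternating, taking both basis elements of $H^1(\G_\q)$ for each $\q\in N_0$ together with a single distinguished character from $G'$ placed at the end of the tuple. Consecutive cup products vanish by Proposition~\ref{cohomology of coprod} since consecutive characters come from distinct coproduct factors. Componentwise $\Z/p^m$-lifts are then provided by $(\PP_m)$ on each $N_0$-factor (Proposition~\ref{DmPPm}), combined via the universal property of the coproduct; surjectivity follows from Proposition~\ref{surj and strong}. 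The main obstacle is the $G'$-component: since $G'$ only satisfies $(\PP_\nu)$ with $\nu<m$, the character on $G'$ must be chosen to factor through its maximal torsion-free $\Z_p$-quotient (the unramified/Frobenius quotient when $G'$ is Demushkin, using $(G')^{ab}\cong \Z_p\oplus \Z/p^\nu$) in order to admit a $\Z/p^m$-lift. This restricts us to a single character from $H^1(G')$ and explains the maximal rank $2(r^T_S-1)+1=2r^T_S-1$ rather than $2r^T_S$.
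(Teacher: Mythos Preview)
Your approach is correct and parallels the paper's, with the same Chebotarev setup and the key observation that $K_1$ is the unique $\F_p$-subextension of $K(\zeta_{p^m})/K$.

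Two points of comparison. First, the statement in Case~(2) asserts the coproduct structure for \emph{every} maximal $(S,T)$-primitive $M=N_0\cup\{\q'\}$, not merely for a constructed one; your hyperplane argument already forces any such $\q'$ to be either non-tame or tame-but-not-$(\nu+1)$-tame (since $K_1\subset K(\zeta_{p^{\nu+1}})$), so the Chebotarev step for $\q'$ should be recast as a dichotomy on an arbitrary completion rather than a construction. The paper does exactly this.

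Second, for the surjection onto $\U_{2r^T_S}(\Z/p^m)$ the paper is more direct: since $G'\twoheadrightarrow\Z_p$ in both cases, one has
\[
G^T_{S\cup M}\ \twoheadrightarrow\ \Big(\coprod_{\q\in N_0}\G_\q\Big)\coprod\Z_p\ \in\ \overline{\D_m},
\]
and Proposition~\ref{unipotent quotients} (via Remark~\ref{unipotent quotients 2} with $k_1=r^T_S-1$, $k_2=1$) gives the map immediately. Your componentwise lifting is the same idea unpacked, and your observation that the $G'$-character must factor through the torsion-free $\Z_p$-quotient is precisely this passage to the quotient done at the level of a single character. One small correction: your ordering with the $G'$-character ``at the end'' fails when $|N_0|=1$ (i.e.\ $r^T_S=2$), since then the two basis characters of $\G_{\q_1}$ are consecutive and cup nontrivially; in that case the $G'$-character must be interleaved between them.
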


\begin{proof}
If the intersection~$K(\zeta_{p^m}) \cap K^T_S$ is strictly larger than~$K$, then it must coincide with~$K_1$. Therefore, the first claim follows directly from the proof of Proposition~\ref{primetop}.

Assume now that~$K_1 \subseteq K^T_S$. In this case, we can construct a set~$N_0$ of size~$r^T_S-1$ satisfying the desired property, by using the same argument as in Proposition~\ref{primetop} to the isomorphism
$$ Gal(K^{T,p,el}_S(\zeta_{p^m})/K(\zeta_{p^m})) \cong Gal(K^{T,p,el}_S/K_1). $$
Let~$N_0 \cup \{\q'\}$ be a maximal~$(S,T)$-primitive set. If~$\q'$ is not tame, then~$G^T_{S \cup M}=G^T_{S \cup N_0}$ by Remark~\ref{normcondition}. If~$\q'$ is tame, then it is not~$m$-tame. Otherwise, the Frobenius at~$\q'$ would fix~$K(\zeta_{p^m}) \cap K^{T,p,el}_S=K_1$, which contradicts~$|N_0|=\dim_p Gal(K^{T,p,el}_S/K_1)$. Hence, in this case, the pro-$p$ group~$G^T_{S \cup M}$ is a coproduct of~$\coprod_{\q \in N_0} \G_{\q} \in \overline{\D_{m}}$ and~$\G_{\q'} \in \D_{\nu}$. In both cases,~$\coprod_{\q \in N_0} \G_{\q} \coprod \Z_p$ is a quotient of~$G^T_{S \cup M}$, and the claim on the unipotent representation follows.
\end{proof}

\begin{rema}
As shown in the proof of the above proposition, the inclusion~$K_{1} \subseteq K^T_S$ can be viewed as an obstruction to constructing a set~$N$ for which~$G^T_{S \cup N}$ becomes the coproduct of the maximal number of Demushkin groups in~$\D_m$ for~$m \geq \nu$.
\end{rema}

As a complement to Corollary~\ref{coro-uni-m=1}, we now state the following result, which corresponds to Theorem~\ref{theoB} in the introduction. Note that~$K_1 \subset K_{S_p}$.

\begin{coro}[Theorem \ref{theoB}] \label{coro_prational_pm}
Let~$K$ be a number field with~$r_2 \geq 2$, and assume the Gras and Leopoldt Conjectures. Then there exists a constant~$p_0$ such that for all primes~$p > p_0$, and for any integer~$m \geq 1$, there exists a Galois extension~$L/K$ with Galois group
$$Gal(L/K) \cong \U_{2r_2+2}(\Z/p^m),$$ which is unramified outside~$S_p \cup N$, where~$N$ is a set of~$r_2$ tame primes.

More precisely, for each such~$p$ and~$m$, there exist infinitely many sets~$N$ of~$r_2$ tame primes such that there exists a surjective group homomorphism
$$G_{S_p \cup N} \twoheadrightarrow \U_{2r_2+2}(\Z/p^m).$$
\end{coro}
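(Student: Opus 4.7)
The strategy is to combine the Gras and Leopoldt Conjectures with Theorem~\ref{theo_arithmetic_main}, selecting via a Chebotarev argument a set~$N$ of~$r_2$ tame primes so that~$G_{S_p \cup N}$ decomposes as a coproduct of~$r_2$ Demushkin factors lying in~$\D_m$ together with a single~$\Z_p$; Remark~\ref{unipotent quotients 2} then delivers the desired unipotent surjection.

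Under the Gras Conjecture, for all sufficiently large~$p$ one has~$\T_{S_p} = 1$; combined with Leopoldt (which forces~$\ker \varphi_{S_p} = 1$) and Proposition~\ref{prop_free}, this makes~$G_{S_p}$ a free pro-$p$ group with~$r_{S_p} = r_2 + 1 \geq 3$. Hence hypotheses~$(i)$ and~$(ii)$ of Theorem~\ref{theo_arithmetic_main} hold for~$S = S_p$ and~$T = \emptyset$, and moreover~$K_1 \subseteq K^{p,el}_{S_p}$ since~$K_1/K$ is unramified outside~$p$.

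Following the strategy of Proposition~\ref{primetop}, I Chebotarev-select a set~$N$ of~$r_2$ primes whose Frobenius classes form an~$\F_p$-basis of~$Gal(K^{p,el}_{S_p}/K_1)$ and which are each~$m$-tame (Frobenius trivial on~$K(\zeta_{p^m})$). I then pick a completing prime~$\q'$ for which~$N \cup \{\q'\}$ is maximal~$(S_p,\emptyset)$-primitive, but which moreover is \emph{non-tame}. The crucial point permitting this is that, for~$p$ large enough,~$\zeta_p \notin K$, so~$K^{p,el}_{S_p}/K$ and~$K(\zeta_p)/K$ have coprime degrees and are linearly disjoint over~$K$; consequently
\[
Gal(K^{p,el}_{S_p}(\zeta_p)/K) \cong Gal(K^{p,el}_{S_p}/K) \times Gal(K(\zeta_p)/K),
\]
and Chebotarev allows one to prescribe the Frobenius at~$\q'$ independently in the two factors (completing the basis in the first, non-trivial in the second). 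Since~$\q'$ is non-tame, Remark~\ref{normcondition} gives that~$\q'$ does not ramify, so~$\G_{\q'} = \Ff_{\q'} \cong \Z_p$ and~$K_{S_p \cup N} = K_{S_p \cup N \cup \{\q'\}}$. Applying Theorem~\ref{theo_arithmetic_main} to~$N \cup \{\q'\}$ then yields
\[
G_{S_p \cup N} \cong \bigg(\coprod_{\q \in N} \G_{\q}\bigg) \coprod \Z_p,
\]
with each~$\G_\q$ Demushkin in~$\D_m$.

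By Theorem~\ref{stab coprod}, this group satisfies~$(\PP_m)$, and Remark~\ref{unipotent quotients 2} applied with~$k_1 = r_2$ and~$k_2 = 1$ produces a surjection onto~$\U_{n+1}(\Z/p^m)$ for every~$n \leq 2r_2 + 1$; choosing~$n = 2r_2 + 1$ gives~$\U_{2r_2+2}(\Z/p^m)$. Varying the Chebotarev choices produces infinitely many such sets~$N$. The principal subtlety is the simultaneous Chebotarev condition on~$\q'$: ensuring that a primitivity-completing prime can be chosen non-tame is precisely what converts the obstruction from Case~(2) of the preceding proposition (a Demushkin factor only in~$\D_\nu$) into the harmless~$\Z_p$ factor, and thereby allows the full coefficient ring~$\Z/p^m$ together with the maximal matrix size~$2r_2+2$.
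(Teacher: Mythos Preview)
Your proof is correct and follows essentially the same route as the paper. The paper does not write out a proof of this corollary; it merely notes that $K_1 \subset K_{S_p}$, placing us in Case~(2) of the preceding proposition with $S=S_p$, $T=\emptyset$, $r_S^T=r_2+1$, so that $|N_0|=r_2$ and $\U_{2r_S^T}(\Z/p^m)=\U_{2r_2+2}(\Z/p^m)$. Your argument is precisely a fleshed-out version of that case.

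One simplification: the extra Chebotarev condition forcing $\q'$ to be non-tame is unnecessary. In Theorem~\ref{theo_arithmetic_main}, primes in $M\setminus N$ always contribute the Frobenius factor $\Ff_{\q'}\cong\Z_p$, not $\G_{\q'}$, since $\q'\notin S\cup N$ is automatically unramified in $K_{S\cup N}^T$ regardless of whether it is tame. So any $\q'$ completing $N$ to a maximal $(S_p,\emptyset)$-primitive set already gives $G_{S_p\cup N}\cong\big(\coprod_{\q\in N}\G_\q\big)\coprod\Z_p$, and your linear-disjointness argument over $K(\zeta_p)$, while correct, can be dropped.
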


\begin{rema} 
When $K$ does not contain the $p$-th roots of the unity, the theorem of Scholz and Reichardt demonstrates the existence of a Galois extension~$L/K$ with Galois group isomorphic to~$\U_{n+1}(\Z/p^m)$ which is unramified outside a set~$N_0$ of tame primes of size $$|N_0| = (d_p Cl_K + d_p E_K)+n+(nm-1)(nm-2),$$ where~$Cl_K$ is the class group of~$K$. This shows that constructing unipotent Galois extensions using only tame ramification requires a significantly large amount of ramification.
\end{rema}

\subsubsection{Splitting and Obstructions}\label{subsection-so}

As discussed earlier, the obstruction to constructing a unipotent representation~$G^T_{S \cup N} \to \U_{2r^T_S+1}(\Z/p^m)$ for~$m \geq \nu$, lies entirely in the relationship between~$K_{1}$ and~$K^T_S$. We now continue the discussion, with particular focus on the role of~$T$.

If there is a prime~$\l \in T$ which does not split in~$K_{1}$, then~$K_{1} \not\subseteq K^T_S$. We can always find this situation by enlarging~$T$ by one element.

\begin{prop}
Let~$K$,~$S$, and~$T$ be such that~$\ker(\varphi^T_S)=1$ and~$\T^T_S=1$. Then there exists a Chebotarev density set of primes~$\l \not\in T \cup S$ such that~$\{ \l\}$ is~$(S,T)$-primitive and~$K_{1} \not\subset K^{T \cup \{\l\}}_S$.    
\end{prop}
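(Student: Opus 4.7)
The plan is to realise both conditions on $\ell$ (the $(S,T)$-primitivity of $\{\ell\}$ and the non-inclusion $K_{1} \not\subset K^{T\cup\{\ell\}}_S$) as Chebotarev conditions inside a single finite Galois extension of $K$, and then to exploit the elementary fact that a group is never the union of two proper subgroups.

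First, I would set $L \coloneq K^{T,p,el}_S \cdot K_{1}$, a finite Galois extension of $K$. For any prime $\ell$ of $K$ unramified in $L/K$ and not in $S\cup T$, the two target properties translate into subgroup-avoidance conditions on $\mathrm{Frob}_\ell \in \Gal(L/K)$. Via the canonical isomorphism $\Gal(K^{T,p,el}_S/K) \simeq (G^T_S)^{p,el}$, the singleton $\{\ell\}$ is $(S,T)$-primitive precisely when $\mathrm{Frob}_\ell \notin H_1 \coloneq \Gal(L/K^{T,p,el}_S)$; and $\ell$ fails to split completely in $K_{1}/K$ precisely when $\mathrm{Frob}_\ell \notin H_2 \coloneq \Gal(L/K_{1})$. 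Since $[K_{1}:K]=p$, the subgroup $H_2$ is proper; assuming $(G^T_S)^{p,el} \neq 0$ (otherwise the statement is vacuous, as no singleton can be $(S,T)$-primitive), $H_1$ is proper as well, and both are normal in $\Gal(L/K)$ because $K^{T,p,el}_S/K$ and $K_{1}/K$ are Galois over $K$.

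The main step is then to invoke the classical fact that a group is never the union of two proper subgroups: $\Gal(L/K) \setminus (H_1 \cup H_2)$ is a non-empty union of conjugacy classes, and the Chebotarev density theorem yields a set of primes $\ell \notin S \cup T$ of positive Dirichlet density whose Frobenius lies in this complement. For any such $\ell$, the first condition gives the $(S,T)$-primitivity of $\{\ell\}$. If one had $K_{1} \subset K^{T\cup\{\ell\}}_S$, then by the very definition of $K^{T\cup\{\ell\}}_S$ the prime $\ell$ would have to split completely in $K_{1}/K$, contradicting the second condition; hence $K_{1} \not\subset K^{T\cup\{\ell\}}_S$, as required.

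The only step requiring some care is recognising that both desired properties are governed by a single finite extension $L/K$ and encode as avoidance of proper normal subgroups of $\Gal(L/K)$; once this reformulation is in place, the union-of-subgroups lemma together with Chebotarev delivers the positive density conclusion essentially for free. The hypotheses $\ker(\varphi^T_S)=1$ and $\T^T_S=1$ do not enter directly into the argument above, but they guarantee (cf.~Proposition~\ref{prop_free}) that the prime $\ell$ so produced can be fed into the coproduct machinery of Theorem~\ref{theo_arithmetic_main}, which is the purpose of the proposition.
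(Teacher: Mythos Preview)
Your argument is correct. Both conditions are indeed Chebotarev conditions in the abelian extension $L=K^{T,p,el}_S\cdot K_1$ of $K$, and the union-of-two-proper-subgroups lemma guarantees that they can be satisfied simultaneously on a set of positive density.

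The paper's proof is shorter but relies on an observation you did not use. In the situation the proposition is meant to address---namely when the obstruction $K_1\subset K^{T,p,el}_S$ is present---one has $L=K^{T,p,el}_S$ and hence $H_1=\{1\}\subset H_2$. In that case a \emph{single} Chebotarev condition suffices: if $\l$ is inert in $K_1$ then its Frobenius is nontrivial in $\Gal(K_1/K)$, and since $K_1\subset K^{T,p,el}_S$ this already forces the Frobenius to be nontrivial in $(G^T_S)^{p,el}$, giving $(S,T)$-primitivity for free. Your two-subgroup argument is therefore more work than the paper needs in the relevant case, but it has the advantage of being uniform: it also covers the situation $K_1\not\subset K^{T,p,el}_S$, where inertness in $K_1$ says nothing about the image in $(G^T_S)^{p,el}$ and the paper's one-line proof, read literally, would be incomplete.
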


\begin{proof}
It suffices to take~$\l$ that is inert in~$K_{1}$.
\end{proof}

In the same spirit as \S~\ref{sec-free}, we study the inclusion~$K_{1,p} \subset K^T_{S_p}$ for fixed~$K$ and~$T$, for varying prime numbers ~$p$. For a fixed integer~$a$, a prime~$p$ is called a Wieferich prime to the base~$a$ if $$a^{p-1} \equiv 1 \pmod{p^2}.$$
According to the Gras’s heuristic involving a binomial probability law, the number of Wieferich primes to a fixed base~$a$ is expected to be finite (\cite[Th\'{e}or\`{e}me 4.9]{GrasFQ}). This perspective is closely connected to Gras Conjecture and motivates the following heuristic.

\begin{heuristic}\label{heu-Wief}
Let~$K$ be a number field, and let~$T \neq \emptyset$ be a fixed set of primes as in Heuristic~\ref{heuristicGST}. Then, it is expected that there exists an integer~$p_0 \in \NN$ such that for every prime~$p > p_0$ and every integer~$m \geq 1$, there exist infinitely many sets~$N$ of tame primes for which the group~$G^T_{S_p \cup N}$ is the coproduct of~$r^T_S$ Demushkin groups in the class~$\D_m$.

As a consequence, there exist Galois extensions of~$K$ with Galois group isomorphic to~$\U_{2r^T_S+1}(\Z/p^m)$, unramified outside~$S_p \cup N$ and totally decomposed at~$T$.
\end{heuristic}

\begin{proof}[Supporting argument]
According to Heuristic~\ref{heuristicGST}, it is expected that~$\ker(\varphi^T_{S_p}) = 1$ and~$\T^T_{S_p} = 1$ for all but finitely many primes~$p$. Let~$\l \in T$ be fixed, and let~$l$ be the rational prime below~$\l$. For all but finitely many primes~$p$, the field~$K_{1,p}$ is the compositum of~$K$ and~$\Q_{1,p}$. The prime~$\l$ splits in~$K_{1,p}$ if and only if~$\Q_{1,p} \subseteq K_{\l}$, which in turn holds if and only if~$l$ splits in~$\Q_{1,p}$ {for $p > [K:\Q]$}. This is equivalent to~$p$ being a Wieferich prime to the base~$l$. By~\cite[Th\'{e}or\`{e}me 4.9]{GrasFQ}, the number of such primes~$p$ is expected to be finite.
\end{proof}

We conclude this paper with a numerical example that illustrates our results. 

\begin{exem}
Let~$K = \Q(i, \sqrt{2}, \sqrt{7})$, which is the splitting field of the polynomial
$$
f(x) = x^8 - 32x^6 + 344x^4 - 512x^2 + 1936.
$$
Fix a root~$\theta$ of~$f(x)$, and consider the following prime ideals in~$\O_K$:
$$
\begin{aligned}
&\q_1 = (163, \theta^2 + 10), \quad \q_2 = (37, \theta^2 - 6\theta + 7), \\
&\q_3 = (2341, \theta^2 + 306\theta - 551), \quad \q_4 = (73, \theta^2 - 10\theta + 18), \\
&\l = \left(241, \frac{-241 \theta^7}{5280} + \frac{10363 \theta^5}{5280} - \frac{39283 \theta^3}{1320} + \theta^2 + \frac{212821 \theta}{1320} - 82 \right).
\end{aligned}
$$
Let~$N = \{ \q_1, \q_2, \q_3, \q_4 \}$ and~$T =\{ \l \}$. Then the union~$N \cup T$ is~$S_3$-primitive, and the Galois group~$G^T_{S_3 \cup N}$ is isomorphic to the coproduct~$\coprod_{1 \le i \le 4} \mathcal{G}_{\mathfrak{q}_i}$, and hence belongs to the class~$\overline{\mathcal{D}_2}$. As a consequence, there exists a~$\U_9(\Z/9)$-extension of~$K$ which is unramified outside~$S_3 \cup N$ and totally splitting at~$\l$.
    
\end{exem}


\bibliography{bibactbib3}

\begin{thebibliography}{10}

\bibitem{conti2025liftinggaloisrepresentationskummer}
A.~Conti, C.~Demarche, and M.~Florence.
\newblock Lifting {G}alois representations via {K}ummer flags.
\newblock {\em arXiv preprint arXiv:2403.08888}, 2024.

\bibitem{dwyer1975homology}
W.~G. Dwyer.
\newblock Homology, {M}assey products and maps between groups.
\newblock {\em Journal of Pure and Applied Algebra}, 6(2):177--190, 1975.

\bibitem{gartner2015higher}
J.~G{\"a}rtner.
\newblock Higher \uppercase{M}assey products in the cohomology of mild
  pro-p-groups.
\newblock {\em Journal of Algebra}, 422:788--820, 2015.

\bibitem{Golod}
E.~Golod and I.R Shafarevich.
\newblock On the class field tower.
\newblock {\em Izvestiya Rossiiskoi Akademii Nauk. Seriya Matematicheskaya},
  28(2):261--272, 1964.

\bibitem{Gras}
G.~Gras.
\newblock {\em \text{C}lass field theory: from theory to practice, corr. 2nd
  ed.}
\newblock Springer Monographs in Mathematics, (2005).

\bibitem{GrasFQ}
G.~Gras.
\newblock \'{E}tude probabiliste des quotients de {F}ermat.
\newblock {\em Funct. Approx. Comment. Math.}, 54(1):115--140, 2016.

\bibitem{Gras-CJM}
G.~Gras.
\newblock Les $\theta$-r{\'e}gulateurs locaux d'un nombre alg{\'e}brique:
  Conjectures p-adiques.
\newblock {\em Canadian Journal of Mathematics}, 68(3):571--624, 2016.

\bibitem{guillot2018four}
P.~Guillot, J.~Min{\'a}{\v{c}}, and A.~Topaz.
\newblock Four-fold \uppercase{M}assey products in \uppercase{G}alois
  cohomology.
\newblock {\em Compositio Mathematica}, 154(9):1921--1959, 2018.

\bibitem{hamza2023extensions}
O.~Hamza.
\newblock On extensions of number fields with given quadratic algebras and
  cohomology.
\newblock {\em Manuscripta Math.}, 176(1):Paper No. 8, 21, 2025.

\bibitem{harpaz2023massey}
Y.~Harpaz and O.~Wittenberg.
\newblock {The \uppercase{M}assey vanishing conjecture for number fields}.
\newblock {\em Duke Mathematical Journal}, 172(1):1 -- 41, 2023.

\bibitem{JaulentSauzet}
J.F. Jaulent and O.~Sauzet.
\newblock Pro-{$\ell$}-extensions de corps de nombres
  {$\mathfrak{l}$}-rationnels.
\newblock {\em J. Number Theory}, 65(2):240--267, 1997.

\bibitem{Koch}
H.~Koch.
\newblock {\em Galois theory of $p$-extensions}.
\newblock Springer Science \& Business Media, 2002.

\bibitem{Labute}
J.~Labute.
\newblock Mild pro-$p$-groups and \uppercase{G}alois groups of
  $\mathit{p}$-extensions of $\mathbb{Q}$.
\newblock {\em J. Reine Angew. Math.}, 596:155--182, 2006.

\bibitem{Labute-Minac}
J.~Labute and J.~Min{\'a}{\v c}.
\newblock Mild pro-$2$-groups and $2$-extensions of $\mathbb{Q}$ with
  restricted ramification.
\newblock {\em Journal of Algebra}, 332(1):136--158, 2011.

\bibitem{LAZ}
M.~Lazard.
\newblock Groupes analytiques $p$-adiques.
\newblock {\em Publications Math{\'e}matiques de l'IH{\'E}S}, 26:5--219, 1965.

\bibitem{lim2024analyticity}
D.~Lim and C.~Maire.
\newblock On the analyticity of the maximal extension of a number field with
  prescribed ramification and splitting.
\newblock {\em Proceedings of the American Mathematical Society},
  152(12):5013--5024, 2024.

\bibitem{CM_JNT}
C.~Maire.
\newblock On the $\mathbb{Z}_\ell$-rank of abelian extensions with restricted
  ramification.
\newblock {\em Journal of Number Theory}, 92(2):376--404, 2002.

\bibitem{maire2024strong}
C.~Maire, J.~Min{\'a}{\v{c}}, R.~Ramakrishna, and N.D. Tân.
\newblock On the strong \uppercase{M}assey property for number fields.
\newblock {\em arXiv preprint arXiv:2409.01028}, 2024.

\bibitem{massey1998higher}
W.S. Massey.
\newblock Higher order linking numbers.
\newblock {\em Journal of Knot Theory and Its Ramifications}, 7:393--414, 1998.

\bibitem{merkurjev2023degenerate}
A.~Merkurjev and F.~Scavia.
\newblock Degenerate fourfold \uppercase{M}assey products over arbitrary
  fields.
\newblock {\em Journal of the European Mathematical Society}, 2024.

\bibitem{merkurjev2024lectures}
A.~Merkurjev and F.~Scavia.
\newblock Lectures on the \uppercase{M}assey vanishing conjecture.
\newblock 2024.

\bibitem{minac2021koszul}
J.~Min{\'a}{\v{c}}, F.~Pasini, C.~Quadrelli, and N.D. T{\^a}n.
\newblock Koszul algebras and quadratic duals in \uppercase{G}alois cohomology.
\newblock {\em Advances in Mathematics}, 380:107569, 2021.

\bibitem{minavc2015kernel}
J.~Min{\'a}{\v{c}} and N.D. T{\^a}n.
\newblock The \uppercase{k}ernel \uppercase{u}nipotent \uppercase{c}onjecture
  and the vanishing of \uppercase{M}assey products for odd rigid fields.
\newblock {\em Advances in Mathematics}, 273:242--270, 2015.

\bibitem{minac2016triple}
J.~Min{\'a}{\v{c}} and N.D. T{\^a}n.
\newblock Triple {M}assey products vanish over all fields.
\newblock {\em Journal of the London Mathematical Society}, 94(3):909--932,
  2016.

\bibitem{minavc2016triple}
J.~Min{\'a}{\v{c}} and N.D. T{\^a}n.
\newblock Triple \uppercase{M}assey products and \uppercase{G}alois theory.
\newblock {\em Journal of the European Mathematical Society}, 19(1):255--284,
  2016.

\bibitem{minavc2017counting}
J.~Min{\'a}{\v{c}} and N.D. T{\^a}n.
\newblock Counting \uppercase{G}alois $\mathbb{U}_4(\mathbb{F}_p)$-extensions
  using {M}assey products.
\newblock {\em Journal of Number Theory}, 176:76--112, 2017.

\bibitem{minavc2022mild}
J.~Min\'{a}\v{c}, F.W. Pasini, C.~Quadrelli, and N.D. T\^{a}n.
\newblock Mild pro-{$p$} groups and the {K}oszulity conjectures.
\newblock {\em Expo. Math.}, 40(3):432--455, 2022.

\bibitem{morishita2004milnor}
M.~Morishita.
\newblock Milnor invariants and \uppercase{M}assey products for prime numbers.
\newblock {\em Compositio Mathematica}, 140(1):69--83, 2004.

\bibitem{MR1124802}
A.~Movahhedi.
\newblock Sur les {$p$}-extensions des corps {$p$}-rationnels.
\newblock {\em Math. Nachr.}, 149:163--176, 1990.

\bibitem{NSW}
J.~Neukirch, A.~Schmidt, and K.~Wingberg.
\newblock {\em Cohomology of \uppercase{n}umber \uppercase{f}ields}, volume
  323.
\newblock Springer Science \& Business Media, 2013.

\bibitem{positselski2014galois}
L.~Positselski.
\newblock Galois cohomology of a number field is \uppercase{K}oszul.
\newblock {\em Journal of Number Theory}, 145:126--152, 2014.

\bibitem{ZAL}
L.~Ribes and P.~Zalesskii.
\newblock {\em Profinite Groups}, volume~40.
\newblock Springer Science \& Business Media, 2010.

\bibitem{Schmid}
P.~Schmid.
\newblock Realizing 2-groups as {G}alois groups following {S}hafarevich and
  {S}erre.
\newblock {\em Algebra Number Theory}, 12(10):2387--2401, 2018.

\bibitem{schmidt2008uber}
A.~Schmidt.
\newblock \"{U}ber pro-{$p$}-fundamentalgruppen markierter arithmetischer
  kurven.
\newblock {\em J. Reine Angew. Math.}, 640:203--235, 2010.

\bibitem{Sha}
I.R. Shafarevich.
\newblock Extensions with given ramification points.
\newblock {\em Publ. Math. IHES}, 18:295--319, 1964.

\bibitem{PARI2}
{The PARI~Group}, Univ. Bordeaux.
\newblock {\em {PARI/GP version \texttt{2.15.3}}}, 2023.
\newblock available from \url{http://pari.math.u-bordeaux.fr/}.

\bibitem{vogel2004massey}
D.~Vogel.
\newblock {\em Massey products in the Galois cohomology of number fields}.
\newblock PhD thesis, 2004.

\bibitem{MR697311}
K.~Wingberg.
\newblock Freie {P}roduktzerlegungen von {G}aloisgruppen und
  {I}wasawa-{I}nvarianten f\"ur {$p$}-{E}rweiterungen von {${\bf Q}$}.
\newblock {\em J. Reine Angew. Math.}, 341:111--129, 1983.

\bibitem{Wingberg89}
K.~Wingberg.
\newblock On {G}alois groups of {$p$}-closed algebraic number fields with
  restricted ramification.
\newblock {\em J. Reine Angew. Math.}, 400:185--202, 1989.

\end{thebibliography}
\bibliographystyle{plain}
\end{document}